\newcounter{cornum}
\newtheorem{corollary}{Corollary}[cornum]
\newcounter{defnum}
\newtheorem{definition}{Definition}[defnum]
\newcounter{examplenum}
\newtheorem{example}{Example}[examplenum]
\newcounter{lemmanum}
\newtheorem{lemma}{Lemma}[lemmanum]
\newcounter{satznum}
\newtheorem{theorem}{Theorem}[satznum]
\newenvironment{acknowledgement}
 {\begin{trivlist}\item[]{\bf Acknowledgement.}}
 {\end{trivlist}}
\newenvironment{remark}
 {\begin{trivlist}\item[]{\bf Remark.}}
 {\end{trivlist}}
\newenvironment{proof}
 {\begin{trivlist}\item[]{\bf Proof.}}
 {\end{trivlist}}
\gdef\me{{\mathbb E}} 
\gdef\nz{{\mathbb N}} 
\gdef\pr{{\mathbb P}} 
\gdef\rz{{\mathbb R}} 
\newcounter{todocounter}
\def\@MRExtract#1 #2!{#1}
\newcommand{\MR}[1]{
  \xdef\@MRSTRIP{\@MRExtract#1 !}
  \href{http://www.ams.org/mathscinet-getitem?mr=\@MRSTRIP}{MR\@MRSTRIP}}
\begin{document}
   \section*{Asymptotic genealogies for a class of generalized
   Wright--Fisher models}
\begin{center}
   Date: \today
\end{center}
   {\sc Thierry Huillet}\footnote{Laboratoire de Physique Th\'eorique
   et Mod\'elisation, CY Cergy Paris Universit\'e, CNRS UMR 8089, 2
   avenue Adolphe-Chauvin, 95302 Cergy-Pontoise, France, e-mail: thierry.huillet@cyu.fr} and
   {\sc Martin M\"ohle}\footnote{Mathematisches Institut, Eberhard Karls Universit\"at T\"ubingen,
   Auf der Morgenstelle 10, 72076 T\"ubingen, Germany, e-mail: martin.moehle@uni-tuebingen.de}
\begin{abstract}
   We study a class of Cannings models with population size $N$ having
   a mixed multinomial offspring distribution with random success
   probabilities $W_1,\ldots,W_N$ induced by independent and identically
   distributed positive random variables $X_1,X_2,\ldots$ via
   $W_i:=X_i/S_N$, $i\in\{1,\ldots,N\}$, where $S_N:=X_1+\cdots+X_N$.
   The ancestral lineages are hence based on a sampling with replacement
   strategy from a random partition of the unit interval into $N$
   subintervals of lengths $W_1,\ldots,W_N$. Convergence results for the
   genealogy of these Cannings models are provided under regularly varying
   assumptions on the tail distribution of $X_1$. In the limit several
   coalescent processes with multiple and simultaneous multiple collisions
   occur. The results extend those obtained in \cite{huillet} for the case
   when $X_1$ is Pareto distributed and complement those obtained in
   \cite{schweinsberg3} for models where one samples without replacement
   from a supercritical branching process.

   \vspace{2mm}

   \noindent Keywords: Cannings model; exchangeable coalescent; regularly
   varying function; simultaneous multiple collisions; weak convergence

   \vspace{2mm}

   \noindent 2020 Mathematics Subject Classification:
            Primary 60J90 
            Secondary 92D15 

   \vspace{2mm}

   \noindent Running title: Asymptotic genealogies
\end{abstract}
\subsection{Introduction} \label{intro}
   Let $X_1,X_2,\ldots$ be independent copies of a
   random variable $X$ taking values in $(0,\infty)$.
   For $N\in\nz:=\{1,2,\ldots\}$ define $S_N:=X_1+\cdots+X_N$ and
   $W_i:=X_i/S_N$, $i\in\{1,\ldots,N\}$. The weights $W_1,\ldots,W_N$
   are exchangeable random variables with $W_1+\cdots+W_N=1$.
   In particular, $\me(W_i)=1/N$, $i\in\{1,\ldots,N\}$. Consider the
   Cannings model \cite{cannings1, cannings2} with population size $N$
   and non-overlapping generations such that, conditional on
   $W_1,\ldots,W_N$, the offspring sizes $\nu_1,\ldots,\nu_N$ have a
   multinomial distribution with parameters $N$ and $W_1,\ldots,W_N$.
   Thus, the offspring distribution is
   \begin{equation} \label{offspringdist}
      \pr(\nu_1=i_1,\ldots,\nu_N=i_N)
      \ =\ \frac{N!}{i_1!\cdots i_N!}\me(W_1^{i_1}\cdots W_N^{i_N}),
   \end{equation}
   $i_1,\ldots,i_N\in\nz_0:=\{0,1,2,\ldots\}$ with $i_1+\cdots+i_N=N$. For
   degenerate $X$, i.e. $\pr(X=c)=1$ for some real constant $c>0$, this
   model reduces to the classical Wright--Fisher model with deterministic
   weights $W_i=1/N$, $i\in\{1,\ldots,N\}$. It is straightforward to check
   that the offspring sizes have joint descending factorial moments
   \begin{equation}
      \me((\nu_1)_{k_1}\cdots(\nu_N)_{k_N})
      \ =\ (N)_{k_1+\cdots+k_N}\me(W_1^{k_1}\cdots W_N^{k_N}),
      \qquad k_1,\ldots,k_N\in\nz_0,
   \end{equation}
   where $(x)_0:=1$ and $(x)_k:=x(x-1)\cdots(x-k+1)$ for $x\in\rz$ and
   $k\in\nz$. In \cite{huillet} this model is studied for the case when
   $X$ is Pareto distributed. If $X$ is gamma distributed with density
   $x\mapsto x^{r-1}e^{-x}/\Gamma(r)$, $x>0$, for some $r>0$, then
   $(W_1,\ldots,W_N)$ is symmetric Dirichlet distributed with parameter
   $r$, leading to the Cannings model with offspring distribution
   \[
   \pr(\nu_1=i_1,\ldots,\nu_N=i_N)\ =\ \frac{N!}{i_1!\cdots i_N!}
   \frac{[r]_{i_1}\cdots [r]_{i_N}}{[rN]_N},
   \]
   $i_1,\ldots,i_N\in\nz_0$ with $i_1+\cdots+i_N=N$, where $[x]_0:=1$ and
   $[x]_i:=x(x+1)\cdots(x+i-1)$ for $x\in\rz$ and $i\in\nz$. This Dirichlet
   multinomial model has been studied extensively in the literature
   (see, for example, Griffiths and Span\`o \cite{griffithsspano}).
   In a series of papers
   \cite{huilletmoehlepop,huilletmoehlecor,huilletmoehleasy} a subclass of
   Cannings models, called conditional branching process models in
   the spirit of Karlin and McGregor \cite{karlinmcgregor1, karlinmcgregor2},
   has been investigated, whose offspring distributions are (by definition)
   obtained by assuming that $\pr(X_1+\cdots+X_N=N)>0$ and
   conditioning on the event that $X_1+\cdots+X_N=N$. This
   construction based on conditioning is rather different from the
   construction based on sampling from a random partition of the unit
   interval we are dealing with in this article. Note however that several
   concrete examples (such as the classical Wright--Fisher model and the
   above mentioned Dirichlet multinomial model) can be constructed in both
   ways, either by sampling or by conditioning.
   For example, the Dirichlet multinomial model is obtained by
   taking $N$ independent and identically distributed negative binomial
   random variables $X_1,\ldots,X_N$ with parameter $r>0$ and $p\in (0,1)$,
   so with distribution $\pr(X_1=k)=\binom{r+k-1}{k}p^r(1-p)^k$, $k\in\nz_0$,
   and conditioning on the event that $X_1+\cdots+X_N=N$.

   The closely related model studied by Schweinsberg \cite{schweinsberg3}
   differs from ours, since sampling is performed without replacement from
   a discrete super-critical Galton--Watson branching process, as explained
   in \cite[Section 1.3]{schweinsberg3}. In that model, $X$ is integer valued
   and satisfies $\me(X)>1$. In our model, $X$ does not need to be integer
   valued and its mean is allowed to be less than $1$. Moreover, the sampling
   in our multinomial model is with replacement, whereas in Schweinsberg's
   model it is without replacement.

   The same multinomial scheme with an additional dormancy mechanism is considered in the recent work of Cordero et al. \cite{cordero}. A class
   of Dirichlet models in the domain of attraction of the Kingman coalescent
   is also studied in two recent works of Boenkost et al. \cite{boenkost1,boenkost2} with an emphasis on Haldane's formula
   \cite{haldane}. We refer the reader to Athreya \cite{athreya} for
   some more information on Haldane's formula.

   Fix $n\in\{1,\ldots,N\}$ and sample $n$ individuals from the current
   generation. For $r\in\nz_0$ define a random partition
   $\Pi_r^{(N,n)}$ of $\{1,\ldots,n\}$ such that
   $i,j\in\{1,\ldots,n\}$ belong to the same block of $\Pi_r^{(N,n)}$
   if and only if the individual $i$ and $j$ share a common parent
   $r$ generations backward in time. The process
   $\Pi^{(N,n)}:=(\Pi^{(N,n)}_r)_{r\in\nz_0}$, called the discrete-time
   $n$-coalescent, takes values in the space ${\cal P}_n$ of partitions
   of $\{1,\ldots,n\}$. As in \cite{huillet} we are interested in the
   limiting behaviour of the discrete-time $n$-coalescent as the total
   population size $N$ tends to infinity. It is easily seen (and
   well-known) that the discrete-time $n$-coalescent is a time-homogeneous
   Markovian process. The transition probabilities
   $p_{\pi\pi'}:=\pr(\Pi_{r+1}^{(N,n)}=\pi'\,|\,\Pi_r^{(N,n)}=\pi)$
   are given by
   \begin{equation} \label{transprob}
      p_{\pi\pi'}\ =\ (N)_j \me(W_1^{k_1}\cdots W_j^{k_j})
      \ =:\ \Phi_j^{(N)}(k_1,\ldots,k_j),
      \qquad \pi,\pi'\in{\cal P}_n,
   \end{equation}
   if each block of $\pi'$ is a union of some blocks of $\pi$, where
   $j:=|\pi'|$ denotes the number of blocks of $\pi'$ and $k_1,\ldots,k_j$
   are the group sizes of merging blocks of $\pi$.
   Note that $\Phi_j^{(N)}(k_1,\ldots,k_j)$ is defined for all
   $N,j,k_1,\ldots,k_j\in\nz$. Since the random variables $W_1,\ldots,W_N$
   are exchangeable and satisfy $W_1+\cdots+W_N=1$, it follows for all
   $N,j,k_1,\ldots,k_j\in\nz$ with $j\le N$ that
   \begin{eqnarray*}
      &   & \hspace{-15mm}(N-j)\me(W_1^{k_1}\cdots W_j^{k_j}W_{j+1})
      \ = \ \me\big(W_1^{k_1}\cdots W_j^{k_j}(W_{j+1}+\cdots+W_N)\big)\\
      & = & \me\big(W_1^{k_1}\cdots W_j^{k_j}(1-W_1-\cdots-W_j)\big)\\
      & = & \me(W_1^{k_1}\cdots W_j^{k_j}) - \sum_{i=1}^j
            \me(W_1^{k_1}\cdots W_{i-1}^{k_{i-1}}W_i^{k_i+1}W_{i+1}^{k_{i+1}}\cdots W_j^{k_j}).
   \end{eqnarray*}
   Multiplication by $(N)_j$ ($=0$ for $j>N$) shows that the consistency
   relation
   \begin{equation} \label{consistent}
      \Phi_j^{(N)}(k_1,\ldots,k_j)
      \ =\ \Phi_{j+1}^{(N)}(k_1,\ldots,k_j,1)+\sum_{i=1}^j
           \Phi_j^{(N)}(k_1,\ldots,k_{i-1},k_i+1,k_{i+1},\ldots,k_j)
   \end{equation}
   holds for all $N,j,k_1,\ldots,k_j\in\nz$. Moreover,
   for all $j,l\in\nz$ with $j\ge l$ and all $k_1,\ldots,k_j,m_1,\ldots,m_l\in\nz$
   with $k_1\ge m_1,\ldots,k_l\ge m_l$, the monotonicity relation
   \begin{equation} \label{monotone}
      \Phi_j^{(N)}(k_1,\ldots,k_j)\ \le\ \Phi_l^{(N)}(m_1,\ldots,m_l)
   \end{equation}
   holds. Note that (\ref{monotone}) follows from (\ref{consistent}) by
   induction on the difference $d:=j-l\in\nz_0$. We refer the reader to
   \cite[Definition 2.2]{moehlesampling} and the remark thereafter for
   similar statements and proofs for the full class of Cannings models.
   Choosing $j=1$ and $k_1=2$ in (\ref{transprob}) shows
   that two individuals share a common ancestor one
   generation backward in time with probability $c_N:=\Phi_1^{(N)}(2)=N\me(W_1^2)$,
   the so-called coalescence probability. We also introduce the effective
   population size $N_e:=1/c_N$. Note that
   $c_N=N\me(W_1^2)\ge N(\me(W_1))^2=1/N$ or, equivalently,
   $N_e\le N$. All Cannings models having an effective population size
   strictly larger than $N$ (such as the Moran model having effective
   population size $N_e=N(N-1)/2>N$ for $N\ge 4$
   and most of the extended Moran models studied by
   Eldon and Wakeley \cite{eldonwakeley} and Huillet and M\"ohle
   \cite{huilletmoehleextended}) therefore do not belong
   to the class of models we are dealing with in this article.

   General results for Cannings models concerning the convergence of their
   genealogical tree to an exchangeable coalescent process as the total
   population size tends to infinity are provided in \cite{moehlesagitov}.
   For information on the theory of exchangeable coalescent processes we
   refer the reader to Pitman \cite{pitman}, Sagitov \cite{sagitov} and
   Schweinsberg \cite{schweinsberg1,schweinsberg2}. Coalescents with
   multiple collisions ($\Lambda$-coalescents) are Markovian stochastic
   processes taking values in the set of partitions of $\nz$. They are
   characterized by a finite measure $\Lambda$ on the unit interval.
   Important examples are Dirac-coalescents, where $\Lambda=\delta_a$
   is the Dirac measure at a given point $a\in [0,1]$, including the
   prominent Kingman coalescent (Kingman \cite{kingman1,kingman2,kingman3}),
   where $\Lambda=\delta_0$ is the Dirac measure at $0$, and the
   star-shaped coalescent, where $\Lambda=\delta_1$. Other important
   examples are beta coalescents, where $\Lambda=\beta(a,b)$ is the
   beta distribution with parameters $a,b>0$, including the
   Bolthausen--Sznitman coalescent, where $\Lambda$ is the uniform
   distribution on the unit interval ($a=b=1$).

   The full class of exchangeable coalescent processes ($\Xi$-coalescents)
   allowing for simultaneous multiple collisions of ancestral lineages
   is characterized by a finite measure $\Xi$ on the infinite
   simplex $\Delta:=\{x=(x_1,x_2,\ldots):x_1\ge x_2\ge\cdots\ge 0,
   \sum_{i=1}^\infty x_i\le 1\}$. An example is the two-parameter
   Poisson--Dirichlet coalescent with parameters $\alpha>0$ and
   $\theta>-\alpha$, where the characterizing measure
   $\nu({\rm d}x):=\Xi({\rm d}x)/\sum_{i=1}^\infty x_i^2$ on $\Delta$ is
   (by definition) the Poisson--Dirichlet distribution
   $\nu={\rm PD}(\alpha,\theta)$ with parameters $\alpha>0$ and
   $\theta>-\alpha$. For more information on the Poisson--Dirichlet coalescent
   we refer the reader to Section 6 of \cite{moehleproper}.
   In most studies, continuous-time coalescent processes $(\Pi_t)_{t\in T}$
   with index set $T=[0,\infty)$ are considered. Note however that all
   $\Xi$-coalescents can as well be introduced with discrete
   time $T=\nz_0$. In this case one speaks about a discrete-time
   $\Xi$-coalescent $(\Pi_r)_{r\in\nz_0}$.
   The following terminology is taken from \cite[Definition 2.1]{huilletmoehlepop}.
\begin{definition} \label{domain}
   (i) A Cannings model is said to be in the domain of attraction of a
   continuous-time coalescent $\Pi=(\Pi_t)_{t\ge 0}$ if for each sample size
   $n\in\nz$ the time-scaled ancestral process
   $(\Pi^{(N,n)}_{\lfloor t/c_N\rfloor})_{t\ge 0}$ converges in
   $D_{{\cal P}_n}([0,\infty))$ to $\Pi^{(n)}$ as $N\to\infty$, where
   $\Pi^{(n)}=(\Pi_t^{(n)})_{t\ge 0}$ denotes the restriction of $\Pi$
   to a sample of size $n$.

   (ii) Analogously, a Cannings model is said to be in the domain of
   attraction of a discrete-time coalescent $\Pi=(\Pi_r)_{r\in\nz_0}$
   if for each sample size $n\in\nz$ the ancestral process
   $(\Pi_r^{(N,n)})_{r\in\nz_0}$ converges in $D_{{\cal P}_n}(\nz_0)$
   to $\Pi^{(n)}$ as $N\to\infty$, where $\Pi^{(n)}=(\Pi_r^{(n)})_{r\in\nz_0}$
   denotes the restriction of $\Pi$ to a sample of size $n$.
\end{definition}
Conditions on the tails of the distribution of $X$ are provided which ensure
that the population model with offspring distribution (\ref{offspringdist})
is in the domain of attraction of some exchangeable coalescent process. The
tail condition is of the standard form $\pr(X>x)\sim x^{-\alpha}\ell(x)$
as $x\to\infty$, where $\alpha\ge 0$ and $\ell$ is a function slowly varying
at $\infty$. The results are collected in Theorem \ref{main} in Section
\ref{results}. It turns out that the three parameter values $\alpha\in\{0,1,2\}$
are boundary cases. Consequently, six different regimes ($\alpha>2$, $\alpha=2$,
$\alpha\in(1,2)$, $\alpha=1$, $\alpha\in(0,1)$ and $\alpha=0$) are considered
leading to different limiting behaviours of the ancestral process. Theorem
\ref{main} also provides the asymptotics of the coalescence probability $c_N$ as
$N\to\infty$ for all six cases. In Section \ref{examples} some
illustrating examples are provided including the case studied in
\cite{huillet} when $X$ is Pareto distributed. The proofs are provided in the
main Section \ref{proofs}. They are based on general convergence-to-the-coalescent
theorems for Cannings models provided in \cite{moehlesagitov}
and combine (Abelian and Tauberian) arguments from the theory of regularly
varying functions in the spirit of Karamata \cite{karamata1,karamata2,karamata3}
with techniques used by Huillet \cite{huillet} for the Pareto case and by
Schweinberg \cite{schweinsberg3} for the related model where the sampling
is performed without replacement.
\subsection{Results} \label{results}
For most of the results it is assumed that there exists a constant
$\alpha\ge 0$ and a function $\ell:(0,\infty)\to (0,\infty)$ slowly varying
at $\infty$ such that
\begin{equation} \label{maincond}
   \pr(X>x)\ \sim\ x^{-\alpha}\ell(x),\qquad x\to\infty.
\end{equation}
Our main result (Theorem
\ref{main}) clarifies the limiting behaviour of the ancestral
structure of the Cannings model with offspring distribution
(\ref{offspringdist}) as the total population size $N$ tends to infinity
under the assumption (\ref{maincond}). It turns out that the parameter values
$\alpha\in\{0,1,2\}$ are boundary cases. It is hence
natural to distinguish six regimes corresponding to
the parameter ranges $\alpha>2$, $\alpha=2$, $\alpha\in (1,2)$, $\alpha=1$,
$\alpha\in (0,1)$ and $\alpha=0$.
In order to state the result it is convenient to
introduce the function $\ell^*:(1,\infty)\to(0,\infty)$ via
\begin{equation} \label{lstar}
   \ell^*(x)\ :=\ \int_1^x \frac{\ell(t)}{t}\,{\rm d}t.
\end{equation}
Note that $\ell^*$ is non-decreasing, slowly varying at $\infty$ and satisfies
$\ell(x)/\ell^*(x)\to 0$ as $x\to\infty$, see for example Bingham and
Doney \cite[p.~717 and 718]{binghamdoney} or Eq.~(1.5.8) on p.~26 of
Bingham, Goldie and Teugels \cite{binghamgoldieteugels} and the remarks
thereafter. More precisely, for every $\lambda>0$, as $x\to\infty$
\begin{eqnarray*}
   \frac{\ell^*(\lambda x)-\ell^*(x)}{\ell(x)}
   & = &
   \frac{1}{\ell(x)}\int_x^{\lambda x} \frac{\ell(t)}{t}\,{\rm d}t
   \ = \ \int_1^\lambda \frac{\ell(xu)}{\ell(x)}\frac{1}{u}\,{\rm d}u
   \ \to\ \int_1^\lambda \frac{1}{u}\,{\rm d}u\ =\ \log\lambda,
\end{eqnarray*}
where the convergence holds by the uniform convergence theorem for slowly
varying functions. Thus, $\ell^*$ is a de Haan function (with
$\ell$-index $1$) and hence slowly varying. For general information on
de Haan theory we refer the reader to Chapter 3 of \cite{binghamgoldieteugels}.

The main (and only) result of this article is the following.
\begin{theorem} \label{main}
For the Cannings model with offspring distribution
(\ref{offspringdist}) the following assertions hold.
   \begin{enumerate}
      \item[(i)] If $\me(X^2)<\infty$ (in particular if (\ref{maincond})
         holds with $\alpha>2$) then the model is in the domain of
         attraction of the continuous-time Kingman coalescent and the
         coalescence probability $c_N$ satisfies $c_N\sim\rho/(\mu^2N)$
         as $N\to\infty$, where $\mu:=\me(X)$ and $\rho:=\me(X^2)$.
      \item[(ii)] If (\ref{maincond}) holds with
         $\alpha=2$ then the model is in the domain of attraction of
         the continuous-time Kingman coalescent and the coalescence
         probability $c_N$ satisfies $c_N\sim 2\ell^*(N)/(\mu^2N)$ as
         $N\to\infty$, where $\mu:=\me(X)$ and $\ell^*$ is defined
         via (\ref{lstar}).
      \item[(iii)] If (\ref{maincond}) holds with $\alpha\in(1,2)$ then
         the model is in the domain of attraction of the continuous-time
         $\Lambda$-coalescent with $\Lambda:=\beta(2-\alpha,\alpha)$ being
         the beta distribution with parameters $2-\alpha$ and $\alpha$.
         Moreover, the coalescence probability $c_N$ satisfies $c_N\sim
         \alpha{\rm B}(2-\alpha,\alpha)\mu^{-\alpha}\ell(N)/N^{\alpha-1}
         =\Gamma(2-\alpha)\Gamma(\alpha+1)\mu^{-\alpha}\ell(N)/N^{\alpha-1}$
         as $N\to\infty$, where $\mu:=\me(X)$.
      \item[(iv)] If (\ref{maincond}) holds with $\alpha=1$, then the
         model is in the domain of attraction of the continuous-time
         Bolthausen--Sznitman coalescent. If $(a_N)_{N\in\nz}$ is a sequence
         of positive real numbers satisfying $\ell^*(a_N)\sim a_N/N$ as
         $N\to\infty$, where $\ell^*$ is defined via (\ref{lstar}), then the
         coalescence probability $c_N$ satisfies
         $c_N\sim\ell(a_N)/\ell^*(a_N)\sim N\ell(a_N)/a_N$ as $N\to\infty$.
      \item[(v)] If (\ref{maincond}) holds with $\alpha\in(0,1)$, then
         the model is in the domain of attraction of the discrete-time
         $\Xi$-coalescent, where the characterising measure
         $\nu({\rm d}x):=\Xi({\rm d}x)/\sum_{i=1}^\infty x_i^2$ is the
         Poisson--Dirichlet distribution $\nu={\rm PD}(\alpha,0)$ with
         parameters $\alpha$ and $\theta:=0$. The coalescence probability
         satisfies $c_N\to 1-\alpha$ as $N\to\infty$.
      \item[(vi)] If (\ref{maincond}) holds with $\alpha=0$, then the model
         is in the domain of attraction of the discrete-time star-shaped
         coalescent and the coalescence probability satisfies
         $c_N\to 1$ as $N\to\infty$.
   \end{enumerate}
   In particular, for the first four cases (i) - (iv), $c_N\to 0$
   as $N\to\infty$.
\end{theorem}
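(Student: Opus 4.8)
The plan is to deduce each of the six assertions from the general convergence‑to‑the‑coalescent criteria for Cannings models of \cite{moehlesagitov} (see also \cite{moehlesampling}): in cases (i)--(iv), where $c_N\to0$, one verifies convergence of the rescaled quantities $\Phi_j^{(N)}(k_1,\ldots,k_j)/c_N$ for all admissible $j,k_1,\ldots,k_j$ and identifies the limits, while in the discrete‑time cases (v) and (vi), where $c_N$ tends to a positive constant, one shows that $\Phi_j^{(N)}(k_1,\ldots,k_j)$ itself converges to the one‑step transition probabilities of a discrete‑time $\Xi$‑coalescent. Two reductions cut the work down. First, the consistency relation (\ref{consistent}) shows that it is enough to pin down a few ``building blocks'': the $\Phi_1^{(N)}(k)$ and, for the $\Lambda$‑cases, the statement that there are no simultaneous multiple collisions. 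Second, the monotonicity relation (\ref{monotone}) combined with H\"older's inequality (writing $W_1^{k_1}W_2^{k_2}$, with $k:=k_1+k_2$, as an interpolated product of powers of $W_1^{k-1}W_2$ and $W_1W_2^{k-1}$, whence $\Phi_2^{(N)}(k_1,k_2)\le\Phi_1^{(N)}(k-1)$) reduces the proof that every higher‑order term is negligible to controlling $\Phi_1^{(N)}(3)$ in cases (i)--(ii) and $\Phi_2^{(N)}(k_1,k_2)$ with $k_1,k_2\ge2$ in cases (iii)--(iv).

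The analytic engine is the Laplace representation
\[
 \me(W_1^{k_1}\cdots W_j^{k_j})\ =\ \frac{(-1)^k}{\Gamma(k)}
 \int_0^\infty t^{k-1}\varphi^{(k_1)}(t)\cdots\varphi^{(k_j)}(t)\,
 \varphi(t)^{N-j}\,{\rm d}t,\qquad k:=k_1+\cdots+k_j,
\]
coming from $S_N^{-k}=\Gamma(k)^{-1}\int_0^\infty t^{k-1}e^{-tS_N}\,{\rm d}t$ and independence, where $\varphi(t):=\me(e^{-tX})$ and $(-1)^{k_i}\varphi^{(k_i)}(t)=\me(X^{k_i}e^{-tX})\ge0$. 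Condition (\ref{maincond}) feeds into this through Karamata's theorem for truncated moments and the Karamata Tauberian theorem: for $\alpha\in(0,2)$ one gets $\me(X^m e^{-tX})\sim\alpha\Gamma(m-\alpha)t^{\alpha-m}\ell(1/t)$ as $t\downarrow0$ (for all $m\ge1$ when $\alpha\in(0,1)$, for $m\ge2$ when $\alpha\in[1,2)$), while $1-\varphi(t)\sim\mu t$ if $\mu:=\me(X)<\infty$, $1-\varphi(t)\sim t\ell^*(1/t)$ if $\alpha=1$, and $1-\varphi(t)\sim\Gamma(1-\alpha)t^\alpha\ell(1/t)$ if $\alpha\in(0,1)$. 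One then substitutes $t=s/\gamma_N$, with $\gamma_N=N\mu$ in cases (i)--(iii), $\gamma_N=a_N$ (the sequence with $\ell^*(a_N)\sim a_N/N$) in case (iv), and $\gamma_N=b_N$ (the sequence with $b_N^\alpha\sim N\Gamma(1-\alpha)\ell(b_N)$) in case (v), so that $\varphi(s/\gamma_N)^{N-j}$ converges to $e^{-\mu s}$, $e^{-s}$, respectively $e^{-s^\alpha}$, and passes to the limit under the integral by dominated convergence, using Potter's bounds for $\ell$ and the inequality $\varphi(t)^{N-j}\le e^{-(N-j)(1-\varphi(t))}$ together with the monotonicity of $1-\varphi$ for the domination.

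This single scheme yields all the stated facts. In (i)--(ii) it gives $c_N=\Phi_1^{(N)}(2)\sim\rho/(\mu^2N)$, respectively $\sim2\ell^*(N)/(\mu^2N)$, and $\Phi_1^{(N)}(3)/c_N\to0$ (for (i) a short extra argument shows $\me(X^3e^{-tX})=o(1/t)$ when $\me(X^2)<\infty$), whence the Kingman coalescent by \cite{moehlesagitov}. In (iii) it gives the stated $c_N$ and
\[
 \frac{\Phi_1^{(N)}(k)}{c_N}\ \longrightarrow\
 \frac{\Gamma(k-\alpha)}{\Gamma(k)\Gamma(2-\alpha)}\ =\
 \int_0^1 x^{k-2}\,\beta(2-\alpha,\alpha)({\rm d}x),\qquad k\ge2,
\]
together with $\Phi_2^{(N)}(k_1,k_2)/c_N\to0$ for $k_1,k_2\ge2$; since these top merger rates are the moments of $x^{-2}\beta(2-\alpha,\alpha)({\rm d}x)$ they determine $\Lambda=\beta(2-\alpha,\alpha)$ uniquely, and the remaining limits are forced by (\ref{consistent}). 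In (iv) the same computation (with $\gamma_N=a_N$) gives the stated $c_N$ and $\Phi_1^{(N)}(k)/c_N\to1/(k-1)=\int_0^1x^{k-2}\,{\rm d}x$, i.e.\ the Bolthausen--Sznitman coalescent. In (v) the uniform asymptotics of $\me(X^m e^{-tX})$ give, after the $b_N$‑rescaling and the substitution $u=s^\alpha$,
\[
 \Phi_j^{(N)}(k_1,\ldots,k_j)\ \longrightarrow\
 \frac{\alpha^{j-1}(j-1)!}{\Gamma(k)}\prod_{m=1}^j
 \frac{\Gamma(k_m-\alpha)}{\Gamma(1-\alpha)},
\]
which is exactly the one‑step transition probability of the discrete‑time Poisson--Dirichlet coalescent with parameters $(\alpha,0)$ (Pitman's sampling formula; cf.\ \cite{moehleproper}), and $c_N\to1-\alpha$ is the case $j=1,k_1=2$.

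For the boundary case (vi) the Karamata expansions degenerate (the two leading terms cancel), so I would switch to a probabilistic argument: when $\pr(X>x)$ is slowly varying the largest summand dominates the sum, $\max_{1\le i\le N}X_i/S_N\to1$ in probability, so the ranked weights converge to $(1,0,0,\ldots)$; since $\sum_iW_i^{k_m}\le1$ and $\sum_{i_1\neq\cdots\neq i_j}W_{i_1}^{k_1}\cdots W_{i_j}^{k_j}\le\prod_{m=1}^j\sum_iW_i^{k_m}\le1$, dominated convergence gives $\Phi_j^{(N)}(k_1,\ldots,k_j)\to\mathbf 1_{\{j=1\}}$, the transition probabilities of the discrete‑time star‑shaped coalescent, and in particular $c_N\to1$. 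Finally, the concluding assertion that $c_N\to0$ in cases (i)--(iv) is immediate from the explicit asymptotics above, since slowly varying functions grow more slowly than any positive power (so $\ell^*(N)/N\to0$ and $\ell(N)/N^{\alpha-1}\to0$) and $\ell(x)/\ell^*(x)\to0$. I expect the main obstacle to be the uniform estimates needed to justify the dominated‑convergence passages — Potter‑type bounds on $t^{k-1}|\varphi^{(k_i)}(t)|$ valid uniformly in $t$, and a lower bound on $(N-j)(1-\varphi(s/\gamma_N))$ that survives the regime‑dependent rescaling — and, beyond that, the bookkeeping in the case $\alpha=1$, where the de Haan function $\ell^*$ and the only implicitly defined sequence $a_N$ must be handled, and in the case $\alpha=0$, where the Tauberian route fails and the one‑big‑jump argument takes over.
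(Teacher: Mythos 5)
Your proposal is correct in outline and shares the paper's overall architecture (the M\"ohle--Sagitov criteria, the Laplace representation of $\me(W_1^{k_1}\cdots W_j^{k_j})$, and Karamata/Tauberian asymptotics for $\me(X^me^{-tX})$), but it diverges from the paper's proof at several points, and the differences are worth recording. For (i) and (iii) the paper avoids the Laplace transform altogether: it conditions on $S_{N-1}$, uses the law of large numbers together with the exponential bound $\pr(S_N\le aN)\le q^N$ of Lemma \ref{lem1}, and reads off $\me(W_1^k)$ from the elementary asymptotics of $\me((X/(X+x))^k)$ (Lemma \ref{lem2}); your uniform Laplace-transform treatment works too but is heavier for these two regimes. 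The more substantive difference is in (iv) and (v): you propose to prove $\varphi(s/\gamma_N)^{N-j}\to e^{-s}$ resp.\ $e^{-s^\alpha}$ and pass to the limit by dominated convergence with Potter bounds --- precisely the step you flag as the main obstacle. The paper deliberately sidesteps this: after the Abelian step (Corollary \ref{karamatacorollary}) it never computes the limit of $\me(e^{-tS_{N-j}/a_N})$ at all, but instead identifies the asymptotics of $\int_0^{\delta a_N}t^{j\alpha-1}\me(e^{-tS_{N-j}/a_N})\,{\rm d}t$ by self-normalization: the exact identities $\Phi_1^{(N)}(1)=1$ and $\me(W_1W_2)=(1-c_N)/(N)_2$, plus an induction on $j$ driven by the consistency relation (\ref{consistent}), pin down these integrals without any domination argument. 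This buys a cleaner proof at the boundary case $\alpha=1$ in particular, where the de Haan function $\ell^*$ makes uniform bounds delicate; your route is more transparent about the stable-law structure but requires carrying out the uniform estimates you only sketch. For (vi) your one-big-jump argument ($\max_iX_i/S_N\to1$ in probability when the tail is slowly varying, hence $\sum_iW_i^k\to1$ and $\Phi_j^{(N)}\to 1_{\{j=1\}}$ by bounded convergence) is a genuinely different and perfectly valid alternative --- provided you prove or cite Darling-type dominance of the maximal summand --- whereas the paper again self-normalizes, comparing $\int_{(N,\infty)}\me((X/(X+x))^p)\,Q_N({\rm d}x)$ for general $p$ to the case $p=1$ where the answer $1/N$ is known exactly. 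Your H\"older interpolation $\Phi_2^{(N)}(k_1,k_2)\le\Phi_1^{(N)}(k_1+k_2-1)$ is correct but is not needed: the monotonicity relation (\ref{monotone}) already reduces everything to $\Phi_2^{(N)}(2,2)$, which both you and the paper then estimate directly in cases (iii)--(vi).
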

The six cases of Theorem \ref{main} are summarized in
Table 1. In the table, $\mu:=\me(X)$, $\rho:=\me(X^2)$,
$\ell^*(x):=\int_1^x\ell(t)/t\,{\rm d}t$, $x>1$, and $(a_N)_{N\in\nz}$ is a
sequence such that $\ell^*(a_N)\sim a_N/N$ as $N\to\infty$.
\begin{center}
   \renewcommand{\arraystretch}{2}
   \begin{tabular}{ccc}
   \hline
   Condition        & Limiting coalescent & Coalescence probability\\
   \hline
   $\me(X^2)<\infty$ & Kingman
                     & $\displaystyle\sim\frac{\rho}{\mu^2 N}$\\
   $\alpha=2$        & Kingman
                     & $\displaystyle\sim \frac{2\ell^*(N)}{\mu^2N}$\\
   $1<\alpha<2$      & $\beta(2-\alpha,\alpha)$
                     & $\displaystyle\sim\frac{\Gamma(2-\alpha)\Gamma(\alpha+1)\ell(N)}{\mu^\alpha N^{\alpha-1}}$\\
   $\alpha=1$        & Bolthausen--Sznitman
                     & $\displaystyle\sim\frac{\ell(a_N)}{\ell^*(a_N)}\sim\frac{N\ell(a_N)}{a_N}$\\
   $\alpha\in (0,1)$ & discrete-time Poisson--Dirichlet($\alpha,0$)
                     & $\sim 1-\alpha$\\
   $\alpha=0$        & discrete-time star-shaped & $\sim 1$\\
   \hline
   \end{tabular}\\
   \ \\
   \ \\
   Table 1: Asymptotics of the ancestry of mixed multinomial Cannings models\\
   of the form (\ref{offspringdist})
   under the tail condition $\pr(X>x)\sim x^{-\alpha}\ell(x)$ as $x\to\infty$. \end{center}
\begin{remark}
   If $\ell(x)\equiv C$ for some constant $C>0$, then
   $\ell^*(x)=C\int_1^x t^{-1}\,{\rm d}t=C\log x$ as $x\to\infty$.
   Assume now in addition that $\alpha=1$. In this case, in part (iv)
   of Theorem \ref{main} one can choose $a_1:=1$ and $a_N:=CN\log N$, $N\in\nz\setminus\{1\}$. The
   coalescence probability thus satisfies $c_N\sim CN/a_N\sim 1/\log N$,
   in agreement with Proposition 6 of Huillet \cite{huillet} for the
   Pareto example $\pr(X>x)=1/x$, $x>1$. The same asymptotics for the
   coalescence probability holds for the related model considered by
   Schweinsberg (see \cite[Lemma 16]{schweinsberg3}) and, for example,
   when $X$ is discrete taking the value $k\in\nz$ with probability
   $\pr(X=k)=1/(k(k+1))$.
\end{remark}
\begin{remark}
   One may doubt that Theorem \ref{main} is valid when $X$ takes values
   close to $0$ with high probability such that $\me(1/S_N)=\infty$ for
   all $N\in\nz$. Typical examples of this form arise when the Laplace
   transform $\psi$ of $X$ satisfies $\psi(u)\sim L(u)$ as $u\to\infty$
   for some function $L$ slowly varying at $\infty$, or, equivalently
   (see Feller \cite{feller}, p.~445, Theorem 2 and p.~446, Theorem 3),
   if $\pr(X\le x)\sim L(1/x)$ as $x\to 0$. A concrete example is
   $P(X\le x)=1/(1-\log x)$, $0<x\le 1$. In this case, $L(x)=1/\log x$,
   $x>0$, and, hence, $\me(1/S_N)=\int_0^\infty(\psi(u))^N\,{\rm d}u=\infty$ for all $N\in\nz$. By Theorem \ref{main}
   this model is in the domain of attraction of the Kingman coalescent,
   since $\me(X^2)<\infty$.\\
   The finiteness or infiniteness of $\me(1/S_N)$ turns out to be irrelevant
   for the statements in Theorem \ref{main}, since the convergence results
   of Theorem \ref{main} solely depend on the limiting behaviour of the
   joint moments of the weights $W_1,\ldots,W_j$ as $N\to\infty$. For
   example (see Lemma \ref{fundamental}), the asymptotics of $\me(W_1^p)$,
   $p>0$, as $N\to\infty$ is determined by the values $\psi(u)$ of the Laplace
   transform $\psi$ for values of $u$ close to $0$. For any fixed $\delta>0$
   the values $u>\delta$ do not play any role.
\end{remark}

\textbf{Conjectures and open problems.}\\
Theorem \ref{main} should also hold for Schweinberg's model
\cite{schweinsberg3}, since sampling without replacement (instead of sampling
with replacement) should neither influence the asymptotics of the coalescence
probability nor the limiting processes arising in Theorem \ref{main}. Note
that in \cite{schweinsberg3} the subclass of models without replacement is
studied where the function $\ell$ in (\ref{maincond}) is constant.
We leave the analysis of Schweinsberg's model under the more general assumption
(\ref{maincond}) for the interested reader.\\
In contrast, conditional branching process models
\cite{huilletmoehlepop,huilletmoehlecor,huilletmoehleasy} seem to be
harder to analyse and behave quite differently in general. Even for the
subclass of so-called compound Poisson models, only partial results are
available. Theorems 2.2 and 2.3 of \cite{huilletmoehleasy} clarify
that many unbiased compound Poisson models are in the domain of attraction of
the Kingman coalescent, and \cite[Theorem 2.5]{huilletmoehleasy} (subcritical
case) demonstrates that the limiting behaviour of compound Poisson models
can differ substantially from all scenarios arising in Theorem \ref{main}.
To the best of the authors knowledge, the limiting behaviour of the ancestral
structure of unbiased conditional branching process models as $N\to\infty$
under assumptions of the form (\ref{maincond}) has not been fully addressed
in the literature. We leave this analysis for future research.
\subsection{Examples} \label{examples}
\begin{example} (Pareto distribution) \label{pareto}
   Let $X$ be Pareto distributed with parameter $\alpha>0$ having
   tail probabilities $\pr(X>x)=x^{-\alpha}$, $x>1$. Clearly,
   (\ref{maincond}) holds with $\ell\equiv 1$. so Theorem \ref{main}
   is applicable.
   Note that
   $\me(X^p)<\infty$ if and only if $p<\alpha$ and in this case
   $\me(X^p)=\alpha\int_1^\infty x^{p-\alpha-1}\,{\rm d}x=\alpha/(\alpha-p)$.
   In particular $\mu:=\me(X)=\alpha/(\alpha-1)<\infty$ for $\alpha>1$ and
   $\rho:=\me(X^2)=\alpha/(\alpha-2)<\infty$ for $\alpha>2$.
   By Theorem \ref{main}, for $\alpha\ge 2$ the model is in the
   domain of attraction of the Kingman coalescent, for $\alpha\in[1,2)$
   in the domain of attraction of the $\beta(2-\alpha,\alpha)$-coalescent,
   and for $\alpha\in(0,1)$ in the domain of attraction of the discrete-time
   Poisson--Dirichlet coalescent with parameter $\alpha$.

   Note that $\ell^*(x)=\int_1^x 1/t\,{\rm d}t=\log x$,
   $x>1$. In part (iv) of Theorem \ref{main}, we can therefore
   choose $a_N:=N\log N$ and obtain $c_N\sim\ell(a_N)/\ell^*(a_N)=
   1/\ell^*(a_N)\sim 1/\log N$ as $N\to\infty$. Thus, by Theorem
   \ref{main}, the coalescence probability $c_N$ satisfies
   \[
   c_N\ \sim\ \left\{
      \begin{array}{cl}
         \frac{\rho}{\mu^2N}\ =\ \frac{(\alpha-1)^2}{\alpha(\alpha-2)N}
            & \mbox{if $\alpha>2$,}\\
         \frac{2\ell^*(N)}{\mu^2 N}\ =\ \frac{\log N}{2N}
            & \mbox{if $\alpha=2$,}\\
         \frac{\Gamma(2-\alpha)\Gamma(\alpha+1)}{\mu^\alpha N^{\alpha-1}}
            & \mbox{if $\alpha\in(1,2)$,}\\
         \frac{1}{\log N}
            & \mbox{if $\alpha=1$,}\\
         1-\alpha
            & \mbox{if $\alpha\in(0,1)$.}
      \end{array}
   \right.
   \]
   For $\alpha>2$ these results coincide with Proposition 7 of
   \cite{huillet} with $\beta=0$, for $\alpha=2$ with Proposition 9
   of \cite{huillet}, for $\alpha\in (1,2)$ with Lemma 4 and
   Proposition 5 of \cite{huillet} with $\beta=0$, for $\alpha=1$
   with Proposition 6 of \cite{huillet} with $\beta=0$, and for
   $\alpha\in (0,1)$ with Theorem 3 of \cite{huillet} with $\beta=0$.
\end{example}
The Pareto example is easily generalized in various ways by replacing
$\ell\equiv 1$ by some other slowly varying function. For example, choosing
for $\ell$ (a power of) the logarithm leads to the following example.
\begin{example}
   Fix $\alpha\ge 0$ and assume that $X$ has tail behaviour
   $\pr(X>x)\sim x^{-\alpha}\ell(x)$ as $x\to\infty$ with $\ell(x)
   :=c(\log x)^{\beta-1}$, $x>1$, for some constants $c>0$ and $\beta>0$.
   This example includes the Pareto model ($c=\beta=1$). Clearly,
   (\ref{maincond}) holds, since $\ell$ slowly varies at $\infty$.
   By Theorem \ref{main}, for $\alpha\ge 2$ the model is in the domain
   of attraction of the Kingman coalescent, for $\alpha\in[1,2)$ in the
   domain of attraction of the $\beta(2-\alpha,\alpha)$-coalescent, for
   $\alpha\in(0,1)$ in the domain of attraction of the discrete-time
   Poisson--Dirichlet coalescent with parameter $\alpha$, and for $\alpha=0$
   in the domain of attraction of the discrete-time star-shaped coalescent.
   Note that
   \[
   \ell^*(x)
   \ =\ \int_1^x \frac{\ell(t)}{t}\,{\rm d}t
   \ =\ c\int_1^x \frac{(\log t)^{\beta-1}}{t}\,{\rm d}t
   \ =\ \frac{c}{\beta}(\log x)^\beta,\qquad x\to\infty.
   \]
   The asymptotics of the coalescence probability $c_N$ as $N\to\infty$
   can hence be obtained from the formulas provided in Theorem \ref{main}.
   In particular, for $\alpha>1$ the asymptotics of $c_N$ depends on the
   concrete value of $\mu:=\me(X)$. For $\alpha=1$ the asymptotics of
   $c_N$ is obtained as follows. The sequence $(a_N)_{N\in\nz}$, defined
   via $a_1:=1$ and $a_N:=(c/\beta)N(\log N)^\beta$ for $N\in\nz\setminus\{1\}$,
   satisfies $\ell^*(a_N)\sim (c/\beta)(\log a_N)^\beta\sim (c/\beta)(\log N)^\beta=a_N/N$
   as $N\to\infty$. By Theorem \ref{main} (iv), the coalescence probability
   $c_N$ satisfies $c_N\sim \ell(a_N)/\ell^*(a_N)\sim\beta/\log N$ as $N\to\infty$.
\end{example}
For illustration three examples with discrete $X$ are provided.
\begin{example} (Yule--Simon distribution)
   Let $X$ be Yule--Simon distributed \cite{kozubowskipodgorski,yule}
   with parameter $\alpha>0$ having distribution
   $\pr(X=k)=\alpha{\rm B}(\alpha+1,k)
   =\alpha\Gamma(\alpha+1)\Gamma(k)/\Gamma(\alpha+1+k)$, $k\in\nz$,
   where ${\rm B}(.,.)$ and $\Gamma(.)$ denote the beta and the gamma
   function respectively. It is easily checked that
   $\pr(X>k)=\Gamma(\alpha+1)\Gamma(k+1)/\Gamma(k+\alpha+1)$, $k\in\nz_0$.
   In particular, $\pr(X>x)\sim\Gamma(\alpha+1)x^{-\alpha}$ as $x\to\infty$.
   Thus, (\ref{maincond}) holds with $\ell\equiv \Gamma(\alpha+1)$.
   Note that $\me((X)_k)<\infty$
   if and only if $k<\alpha$ and in this case $\me((X)_k)=\alpha k!{\rm B}(\alpha-k,k)$.
   In particular, $\mu=\me(X)=\alpha/(\alpha-1)$ for $\alpha>1$ and
   $\me((X)_2)=2\alpha/((\alpha-1)(\alpha-2))$ for $\alpha>2$, which yields
   $\rho=\me(X^2)=\alpha^2/((\alpha-1)(\alpha-2))$ for $\alpha>2$.
   By Theorem \ref{main}, for $\alpha\ge 2$ the model is in the domain of
   attraction of the Kingman coalescent, for $\alpha\in [1,2)$ in the domain
   of attraction of the $\beta(2-\alpha,\alpha)$-coalescent, and for
   $\alpha\in(0,1)$ in the domain of attraction of the discrete-time
   Poisson--Dirichlet coalescent with parameter $\alpha$. Note that
   $\ell^*(x)=\Gamma(\alpha+1)\int_1^x 1/t\,{\rm d}t=\Gamma(\alpha+1)\log x$,
   $x>1$. In part (iv) of Theorem  \ref{main} we can thus choose
   $a_N:=\Gamma(\alpha+1)N\log N$ and obtain $c_N\sim\ell(a_N)/\ell^*(a_N)
   =1/\log a_N\sim 1/\log N$ as $N\to\infty$. Thus, by Theorem \ref{main}, the
   coalescence probability $c_N$ satisfies
   \[
   c_N\ \sim\ \left\{
      \begin{array}{cl}
         \frac{\rho}{\mu^2N}\ =\ \frac{\alpha-1}{(\alpha-2)N}
            & \mbox{if $\alpha>2$,}\\
         \frac{2\ell^*(N)}{\mu^2N}\ =\ \frac{\log N}{N}
            & \mbox{if $\alpha=2$,}\\
         \frac{\Gamma(2-\alpha)(\Gamma(\alpha+1))^2}{\mu^\alpha N^{\alpha-1}}
            & \mbox{if $\alpha\in(1,2)$,}\\
         \frac{1}{\log N}
            & \mbox{if $\alpha=1$,}\\
         1-\alpha
            & \mbox{if $\alpha\in(0,1)$.}
      \end{array}
   \right.
   \]
   The Yule--Simon model is a discrete analog of the Pareto
   model discussed in Example \ref{pareto}. We refer the reader exemplary to
   Kozubowski and Podg\'orski \cite{kozubowskipodgorski} for some further
   information on Sibuya and Yule--Simon distributions.
\end{example}
\begin{example} (Sibuya distribution)
   Let $X$ be Sibuya distributed with parameter $\alpha\in (0,1)$ having
   probability generating function $f(s)=1-(1-s)^\alpha$, $s\in [0,1]$.
   Note that $f(s)=\sum_{k=1}^\infty (-1)^{k-1}\binom{\alpha}{k}s^k$,
   so $X$ takes the value $k\in\nz$ with probability
   $\pr(X=k)=(-1)^{k-1}\binom{\alpha}{k}=\alpha\Gamma(k-\alpha)/(\Gamma(1-\alpha)k!)$.
   The Laplace transform $\psi$ of $X$ satisfies
   $1-\psi(u)=1-f(e^{-u})=(1-e^{-u})^\alpha\sim u^\alpha$ as $u\to 0$, i.e.
   relation (2.1) of Bingham and Doney \cite{binghamdoney} holds with $n=0$,
   $\beta=\alpha\in (0,1)$ and $L\equiv 1$. By Theorem A of
   \cite{binghamdoney} this relation is equivalent (see Eq.~(2.3b) of
   \cite{binghamdoney}) to $\pr(X>x)\sim (\Gamma(1-\alpha))^{-1}x^{-\alpha}$
   as $x\to\infty$, which shows that (\ref{maincond}) holds with
   $\ell\equiv 1/\Gamma(1-\alpha)$.
   Part (v) of Theorem \ref{main} ensures that the model is in the domain
   of attraction of the Poisson--Dirichlet coalescent with parameter
   $\alpha$ and the coalescence probability $c_N$ satisfies
   $c_N\to 1-\alpha$ as $N\to\infty$.
   The same results are valid when $X$ is $\alpha$-stable,
   $\alpha\in (0,1)$, with Laplace transform $\psi(u):=e^{-u^\alpha}$,
   $u\ge 0$, since in this case the same asymptotics $1-\psi(u)\sim u^\alpha$
   as $u\to 0$ holds. In this sense the Sibuya example is a
   discrete version of the $\alpha$-stable case with $\alpha\in(0,1)$.
\end{example}
\begin{example}
   Let $\alpha\in(1,2)$ and $b\in(0,1/(\alpha-1)]$. Assume that $X$ has
   probability generating function $f(s)=(b+1)s+b((1-s)^\alpha-1)$,
   $s\in [0,1]$. Note that $X$ is discrete taking values in $\nz$ with
   probabilities $p_k:=\pr(X=k)$, $k\in\nz$, given by $p_1=b+1-b\alpha$
   and $p_k=b(-1)^k\binom{\alpha}{k}=b\Gamma(k-\alpha)/(\Gamma(-\alpha)k!)$
   for $k\in\{2,3,\ldots\}$. From $f'(s)=b+1-b\alpha(1-s)^{\alpha-1}$ it
   follows that $\mu:=\me(X)=f'(1)=b+1$. The Laplace transform $\psi$ of
   $X$ satisfies $\psi(u)-1+(b+1)u\sim bu^\alpha$ as $u\to 0$, i.e.
   relation (2.1) of Bingham and Doney \cite{binghamdoney} holds with
   $n=1$, $\beta=\alpha-1\in(0,1)$, and $L\equiv b$. By Theorem A of
   \cite{binghamdoney} this relation is equivalent (see Eq.~(2.3b) of
   \cite{binghamdoney}) to $\pr(X>x)\sim b(-\Gamma(1-\alpha))^{-1}x^{-\alpha}$
   as $x\to\infty$, which shows that (\ref{maincond}) holds with $\ell(x)\equiv
   b/(-\Gamma(1-\alpha))$.
   By Theorem \ref{main} (iii) the model is in the domain of attraction
   of the $\beta(2-\alpha,\alpha)$-coalescent and
   $c_N\sim(\alpha-1)\Gamma(\alpha+1)b/(\mu^\alpha N^{\alpha-1})$ as $N\to\infty$.
\end{example}
We close this section with a concrete example belonging to the boundary
case (vi) ($\alpha=0$).
\begin{example}
   Let $\beta>0$. If $\pr(X>x)=1/(1+\log x)^\beta$, $x\ge 1$, then
   $\pr(X>x)\sim\ell(x)$ as $x\to\infty$ with $\ell(x):=1/(\log x)^\beta$.
   By Theorem \ref{main} (vi), the model is in the domain of attraction
   of the discrete-time star-shaped coalescent and $c_N\to 1$ as $N\to\infty$.
\end{example}
\subsection{Proofs} \label{proofs}
The following auxiliary result (Lemma \ref{lem1}) is a modified version
of Lemma 5 of Schweinsberg \cite{schweinsberg3}, adapted to our model.
The result may be also viewed as a weak version of Cram\'er's large
deviation theorem (see, for example, \cite[Theorem 2.2.3]{dembozeitouni}).
Recall that $\mu:=\me(X)\in(0,\infty]$.
\begin{lemma} \label{lem1}
   For every $a\in(0,\mu)$ there exists $q\in(0,1)$ such
   that $\pr(S_N\le aN)\le q^N$ for all $N\in\nz$.
\end{lemma}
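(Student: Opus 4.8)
The plan is to use a standard exponential (Chernoff--Cram\'er) bound. Since $\mu=\me(X)\in(0,\infty]$ and $a<\mu$, we have $\me(X)>a$, so intuitively each $X_i$ exceeds $a$ ``on average'', and a deviation $S_N\le aN$ should be exponentially unlikely. Concretely, for any $\theta>0$ we apply Markov's inequality to $e^{-\theta S_N}$: since $S_N\le aN$ is equivalent to $e^{-\theta S_N}\ge e^{-\theta aN}$, Markov gives
\[
   \pr(S_N\le aN)\ \le\ e^{\theta aN}\me(e^{-\theta S_N})\ =\ \big(e^{\theta a}\psi(\theta)\big)^N,
\]
where $\psi(\theta):=\me(e^{-\theta X})$ is the Laplace transform of $X$. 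Thus it suffices to find a single $\theta>0$ with $e^{\theta a}\psi(\theta)<1$, and then set $q:=e^{\theta a}\psi(\theta)\in(0,1)$.

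To produce such a $\theta$, I would consider the function $g(\theta):=\log\psi(\theta)+\theta a$ for $\theta\ge 0$. We have $g(0)=0$, and $g$ is differentiable (indeed smooth) on a neighbourhood of $0$ from the right, with $g'(0^+)=\psi'(0^+)/\psi(0)+a=-\me(X)+a=a-\mu<0$ (this derivative computation is clean when $\mu<\infty$; when $\mu=\infty$ one has $\psi'(0^+)=-\infty$, so $g$ is still strictly decreasing at the origin, or one may simply note $\mu=\infty$ includes the case $\mu$ large and reduce to a finite truncation). Hence there exists a small $\theta_0>0$ with $g(\theta_0)<0$, i.e. $e^{\theta_0 a}\psi(\theta_0)<1$. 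Taking $q:=e^{\theta_0 a}\psi(\theta_0)$ completes the argument, the bound $\pr(S_N\le aN)\le q^N$ holding for all $N\in\nz$ including $N=1$.

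The only mild subtlety---and the step I would be most careful about---is the case $\mu=\me(X)=\infty$, where the derivative argument needs a word of justification. The cleanest fix is truncation: pick any finite $a'\in(a,\infty)$ and a truncation level $M$ large enough that $\me(X\wedge M)>a'$ (possible since $\me(X\wedge M)\uparrow\mu=\infty$), replace $X$ by $X\wedge M$ (which only decreases $S_N$, hence only increases $\pr(S_N\le aN)$ relative to the truncated sum), and run the above argument for the bounded random variable $X\wedge M$, whose Laplace transform is manifestly finite and differentiable at $0$ with derivative $-\me(X\wedge M)<-a'<-a$. This reduces everything to the finite-mean case. Apart from this, the argument is entirely routine: the essential content is the convexity/monotonicity of $\log\psi(\theta)+\theta a$ near $\theta=0$, which forces it strictly below its value at $0$.
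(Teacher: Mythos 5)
Your proof is correct and is essentially the paper's own argument: the paper applies the same exponential Markov bound to $\me(x^{S_N/a})$ with $x\in(0,1]$, which is your Chernoff bound under the substitution $x=e^{-\theta a}$, and it locates the good parameter by the same derivative-at-the-endpoint computation ($f'(1)=\mu/a>1$ there versus $g'(0^+)=a-\mu<0$ here). Your truncation remark for $\mu=\infty$ is a reasonable extra precaution; the paper simply reads $f'(1)=\mu/a>1$ as allowing the value $+\infty$.
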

\begin{proof}
   Let $f$ denote the moment generating function of $Y:=X/a$, i.e.
   $f(x):=\me(x^Y)$, $x\in [0,1]$. From $\me(x^{S_N/a})\ge
   \int_{\{S_N\le aN\}} x^{S_N/a}\,{\rm d}\pr\ge x^N\pr(S_N\le aN)$
   it follows that $\pr(S_N\le aN)\le x^{-N}\me(x^{S_N/a})=(x^{-1}f(x))^N$
   for all $x\in (0,1]$. Since $f(1)=1$ and $f'(1)=\me(Y)=\mu/a>1$,
   there exists $x_0\in(0,1)$ such that $f(x_0)<x_0$. The result follows
   with $q:=x_0^{-1}f(x_0)$.\hfill$\Box$
\end{proof}
We now prove part (i) of Theorem \ref{main}.
\begin{proof} (of Theorem \ref{main} (i))
   We first verify that $Nc_N\to\rho/\mu^2$ as $N\to\infty$.
   We have
   \[
   Nc_N
   \ =\ N^2\me(W_1^2)
   \ =\ N^2\int_{(0,\infty)}\me\bigg(\bigg(\frac{x}{x+S_{N-1}}\bigg)^2\bigg)
        \,\pr_X({\rm d}x)=\int_{(0,\infty)}f_N(x)\,\pr_X({\rm d}x),
   \]
   where $f_N(x):=\me((x/(x/N+S_{N-1}/N))^2)$. By the law of large numbers,
   $(x/(x/N+S_{N-1}/N))^2\to(x/\mu)^2$ almost surely and, hence, also in
   distribution as $N\to\infty$. For any $r>0$ the map $x\mapsto x\wedge r$
   is bounded and continuous on $[0,\infty)$. Thus,
   \[
   \liminf_{N\to\infty}f_N(x)\ \ge\
   \liminf_{N\to\infty}\me
   \bigg(\bigg(\frac{x}{\frac{x}{N}+\frac{S_{N-1}}{N}}\bigg)^2\wedge r\bigg)
   \ =\ (x/\mu)^2\wedge r.
   \]
   Letting $r\to\infty$ yields $\liminf_{N\to\infty}f_N(x)\ge (x/\mu)^2$.
   Therefore, by Fatou's lemma, $\liminf_{N\to\infty}Nc_N
   =\liminf_{N\to\infty}\int_{(0,\infty)}f_N(x)\,\pr_X({\rm d}x)
   \ge\int_{(0,\infty)}(x/\mu)^2\,\pr_X({\rm d}x)=\rho/\mu^2$.

   In order to see that $\limsup_{N\to\infty}Nc_N\le \rho/\mu^2$ fix
   $a\in(0,\mu)$. By Lemma \ref{lem1} there exist $q\in (0,1)$ such that
   $\pr(S_N\le aN)\le q^N$ for all $N\in\nz$. Therefore,
   \begin{eqnarray*}
      Nc_N
      & = & N^2\me(W_1^2)
      \ =\ N^2\me(W_1^21_{\{S_N\le aN\}}) + N^2\me((X_1/S_N)^21_{\{S_N>aN\}})\\
      & \le & N^2\pr(S_N\le aN) + N^2\me(((X_1/(aN))^2)
      \ \le\ N^2q^N + \frac{\rho}{a^2}\ \to\ \frac{\rho}{a^2},\qquad N\to\infty.
   \end{eqnarray*}
   Thus, $\limsup_{N\to\infty}Nc_N\le\rho/a^2$. Letting $a\uparrow\mu$
   shows that $\limsup_{N\to\infty}Nc_N\le \rho/\mu^2$ and
   $Nc_N\to\rho/\mu^2$ is established.

   It is well-known (see, \cite[Section 4]{moehletotal}) that any sequence of
   Cannings models with population sizes $N$ is in
   the domain of attraction of the Kingman coalescent if and only if
   $\Phi_1^{(N)}(3)/c_N\to 0$ as $N\to\infty$. Thus, we have to verify
   that $\me(W_1^3)/\me(W_1^2)\to 0$ as $N\to\infty$.
   Since $\me(W_1^2)\ge(\me(W_1))^2=1/N^2$ it suffices to verify that
   $N^2\me(W_1^3)\to 0$ as $N\to\infty$. Fix again $a\in (0,\mu)$ and
   choose $q\in(0,1)$ as above. We have
   \[
   N^2\me(W_1^3)
   \ =\ N^2\me(W_1^31_{\{S_N\le aN\}}) + N^2\me(W_1^31_{\{S_N>aN\}}).
   \]
   Since $N^2\me(W_1^31_{\{S_N\le aN\}})\le N^2\pr(S_N\le aN)\le N^2q^N\to 0$
   as $N\to\infty$ it remains to verify that
   $N^2\me(W_1^31_{\{S_N>aN\}})\to 0$ as $N\to\infty$. We have
   \begin{eqnarray*}
      N^2\me(W_1^31_{\{S_N>aN\}})
      & = & N^2\me(X_1^3S_N^{-3}1_{\{S_N>aN,X_1\le aN\}}) +
            N^2\me(W_1^31_{\{S_n>aN,X_1>aN\}})\\
      & \le & \frac{1}{a^3N}\me(X^31_{\{X\le aN\}})+ N^2\pr(X>aN).
   \end{eqnarray*}
   Clearly, $N^2\pr(X>aN)\le a^{-2}\me(X^21_{\{X>aN\}})\to 0$ as $N\to\infty$,
   since $\rho:=\me(X^2)<\infty$. It hence remains to verify that
   $N^{-1}\me(X^31_{\{X\le aN\}})\to 0$ as $N\to\infty$.
   Let $\varepsilon>0$. Choose $L$
   sufficiently large such that $\me(X^21_{\{X>L\}})\le\varepsilon/(2a)$. Then,
   for all $N\in\nz$ with $N\ge 2\rho L/\varepsilon$,
   \begin{eqnarray*}
      N^{-1}\me(X^31_{\{X\le aN\}})
      & = & N^{-1}\me(X^31_{\{X\le aN,X\le L\}}) + N^{-1}\me(X^31_{\{L<X\le aN\}})\\
      & \le & N^{-1}L\rho  + a\me(X^21_{\{X>L\}})
      \ \le\ \frac{\varepsilon}{2} + \frac{\varepsilon}{2}
      \ =\ \varepsilon,
   \end{eqnarray*}
   which shows that $N^{-1}\me(X^31_{\{X\le aN\}})\to 0$ as
   $N\to\infty$.\hfill$\Box$
\end{proof}
We now prepare the proofs of the parts (ii) and (iii) of Theorem
\ref{main}. We need the following two auxiliary results.
\begin{lemma} \label{lem2}
   If (\ref{maincond}) holds for some
   $\alpha\ge 0$ then for all $p>\alpha$,
   \[
   \me\bigg(\bigg(\frac{X}{X+x}\bigg)^p\bigg)\ \sim\
   \frac{\Gamma(\alpha+1)\Gamma(p-\alpha)}{\Gamma(p)}x^{-\alpha}\ell(x),\qquad x\to\infty,
   \]
   and
   \[
   \me\bigg(\bigg(\frac{X}{X\vee x}\bigg)^p\bigg)\ \sim\ \frac{p}{p-\alpha}
   x^{-\alpha}\ell(x),\qquad x\to\infty.
   \]
\end{lemma}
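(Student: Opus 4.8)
The plan is to express both expectations as integrals against the tail distribution of $X$ and then invoke Karamata's theorem for regularly varying functions. For the first identity, write
\[
\me\bigg(\bigg(\frac{X}{X+x}\bigg)^p\bigg)
\ =\ \int_{(0,\infty)}\bigg(\frac{y}{y+x}\bigg)^p\,\pr_X({\rm d}y)
\ =\ \int_0^\infty \pr\bigg(\bigg(\frac{X}{X+x}\bigg)^p>t\bigg)\,{\rm d}t,
\]
and solve the inequality $(y/(y+x))^p>t$ for $y$: it is equivalent to $y>x t^{1/p}/(1-t^{1/p})$ for $t\in(0,1)$ (and is void for $t\ge 1$). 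Substituting and using (\ref{maincond}) in the form $\pr(X>z)\sim z^{-\alpha}\ell(z)$, the integrand becomes, for large $x$, asymptotically $(xt^{1/p}/(1-t^{1/p}))^{-\alpha}\ell(\text{large})$. Factoring out $x^{-\alpha}\ell(x)$ and using the uniform convergence theorem for slowly varying functions to replace $\ell(x t^{1/p}/(1-t^{1/p}))/\ell(x)$ by $1$, one is left with the deterministic integral $\int_0^1 t^{-\alpha/p}(1-t^{1/p})^\alpha\,{\rm d}t$; the substitution $s=t^{1/p}$ turns this into $p\int_0^1 s^{p-\alpha-1}(1-s)^\alpha\,{\rm d}s=p\,{\rm B}(p-\alpha,\alpha+1)=p\Gamma(p-\alpha)\Gamma(\alpha+1)/\Gamma(p+1)=\Gamma(\alpha+1)\Gamma(p-\alpha)/\Gamma(p)$, which is the claimed constant. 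The hypothesis $p>\alpha$ is exactly what makes this Beta integral converge at $s=0$.

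For the second identity the computation is simpler: $(y/(y\vee x))^p=1$ if $y\ge x$ and $=(y/x)^p$ if $y<x$, so
\[
\me\bigg(\bigg(\frac{X}{X\vee x}\bigg)^p\bigg)
\ =\ \pr(X\ge x) + x^{-p}\me(X^p 1_{\{X<x\}})
\ =\ \pr(X\ge x) + x^{-p}\int_0^x p y^{p-1}\pr(X>y)\,{\rm d}y - \pr(X<x)\cdot 0,
\]
after an integration by parts (the boundary term at $y=x$ is $\pr(X>x)$, which combines with $\pr(X\ge x)$; the term at $y=0$ vanishes since $p>\alpha\ge 0$ forces $y^p\pr(X>y)\to 0$). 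Karamata's theorem gives $\int_0^x y^{p-1}\pr(X>y)\,{\rm d}y\sim \int_0^x y^{p-1-\alpha}\ell(y)\,{\rm d}y\sim x^{p-\alpha}\ell(x)/(p-\alpha)$, so $x^{-p}\cdot p\int_0^x y^{p-1}\pr(X>y)\,{\rm d}y\sim (p/(p-\alpha))x^{-\alpha}\ell(x)$. Since $\pr(X\ge x)\sim x^{-\alpha}\ell(x)$ is of the same order, the total is $(1+p/(p-\alpha))x^{-\alpha}\ell(x)$... but this does not match; the correct bookkeeping is that after integration by parts one has $x^{-p}\me(X^p1_{\{X<x\}})=x^{-p}\big(-x^p\pr(X>x)+\lim_{y\to0}y^p\pr(X>y)+p\int_0^x y^{p-1}\pr(X>y)\,{\rm d}y\big)$, i.e.\ the $-\pr(X>x)$ term cancels against $\pr(X\ge x)$, leaving exactly $(p/(p-\alpha))x^{-\alpha}\ell(x)$.

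The main obstacle is justifying the interchange of the limit $x\to\infty$ with the integral in the first identity: the pointwise asymptotics of the integrand must be upgraded to convergence of the integrals, uniformly enough near the endpoint $t=1$ (equivalently $s=1$, $y\to0$) where the slowly varying function is evaluated at arguments tending to $0$ rather than $\infty$. The clean way around this is to avoid the probabilistic-integral representation and instead write $\me((X/(X+x))^p)$ directly via Fubini as $p\int_0^\infty (x+z)^{-p}\,z^{p-1}\,\pr(X>z)\,{\rm d}z$ type expressions, or simply to quote the appropriate Abelian theorem for such Mellin-type convolutions of a regularly varying tail (this is standard de Haan/Karamata machinery, as in Chapter~1 of Bingham, Goldie and Teugels \cite{binghamgoldieteugels}, already cited in the paper). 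Once the interchange is legitimate, both displays reduce to the elementary Beta-integral evaluations above. I would present the first identity via the $z$-integral $\me((X/(X+x))^p)=\int_{(0,\infty)}(z/(z+x))^p\,\pr_X({\rm d}z)$, split at $z=\varepsilon x$ and $z=x/\varepsilon$ to control the tails using (\ref{maincond}) and Potter's bounds, and let $\varepsilon\downarrow0$ at the end.
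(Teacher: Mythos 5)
Your proposal is correct and follows essentially the same route as the paper: both reduce each expectation to an integral of $\pr(X>\cdot)$ against a fixed kernel (your layer-cake integral becomes the paper's $\int_0^\infty pt^{p-1}(t+1)^{-p-1}\pr(X>xt)\,{\rm d}t$ under the substitution $u=(t/(t+1))^p$) and then invoke a Karamata-type Abelian theorem to pull out $\pr(X>x)$, leaving the same Beta integral; the paper handles the interchange you flag by citing Karamata's Theorem 3 (restated as Corollary~\ref{karamatacorollary} in its appendix). Only minor blemishes: the delicate endpoint in your first integral is $t\downarrow 0$ (where the threshold $xt^{1/p}/(1-t^{1/p})\to 0$), not $t=1$, and the integration-by-parts bookkeeping in the second identity should be stated cleanly rather than corrected midstream.
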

\begin{proof}
   Let $T$ be a nonnegative random variable and $f:[0,\infty)\to\rz$
   a continuous and piecewise continuously differentiable function such
   that $f(T)$ is integrable. Then,
   \begin{eqnarray}
   \me(f(T))-f(0)
   & = & \int_{[0,\infty)}\big(f(x)-f(0)\big)\,\pr_T({\rm d}x)
   \ = \ \int_{[0,\infty)}\int_{[0,x)}
         f'(t)\,\lambda({\rm d}t)\,\pr_T({\rm d}x)\nonumber\\
   & = & \int_{[0,\infty)} f'(t)
         \int_{(t,\infty)} \pr_T({\rm d}x)\,\lambda({\rm d}t)
   \ = \ \int_0^\infty f'(t)\,\pr(T>t)\,{\rm d}t.\label{meanf}
   \end{eqnarray}
   Let $x>0$. Applying (\ref{meanf}) to $T:=X/x$ and
   $f(t):= (t/(t+1))^p$ shows that
   \[
   \me\bigg(\bigg(\frac{X}{X+x}\bigg)^p\bigg)
   \ =\ \int_0^\infty \frac{pt^{p-1}}{(t+1)^{p+1}}\pr(X>xt)\,{\rm d}t.
   \]
   By Theorem 3 of Karamata \cite{karamata3}, applied to the function
   $\varphi(x):=\pr(X>x)$, which is regularly varying at $\infty$ with
   index $\gamma:=-\alpha$, it follows that, as $x\to\infty$,
   \[
   \me\bigg(\bigg(\frac{X}{X+x}\bigg)^p\bigg)\ \sim\
   \pr(X>x)\int_0^\infty \frac{pt^{p-1}}{(t+1)^{p+1}} t^{-\alpha}\,{\rm d}t
   \ =\ \pr(X>x)\frac{\Gamma(\alpha+1)\Gamma(p-\alpha)}{\Gamma(p)}.
   \]
   The same steps, but applied to $f(t):=(t/(t\vee 1))^p$, show that
   \begin{eqnarray*}
      \me\bigg(\bigg(\frac{X}{X\vee x}\bigg)^p\bigg)
      & = & \int_0^\infty f'(t)\pr(X>xt)\,{\rm d}t
      \ \sim \ \pr(X>x)\int_0^\infty f'(t) t^{-\alpha}\,{\rm d}t\\
      & = & \pr(X>x)\int_0^1 pt^{p-\alpha-1}\,{\rm d}t
      \ = \ \pr(X>x)\frac{p}{p-\alpha}.\hspace{2cm}\Box
   \end{eqnarray*}
\end{proof}
\begin{lemma} \label{fundamental}
   For all $j\in\{1,\ldots,N\}$ and $p_1,\ldots,p_j>0$,
   \begin{equation} \label{fundamental1}
      \me(W_1^{p_1}\cdots W_j^{p_j})\ =\ \frac{1}{\Gamma(p)}
      \int_0^\infty u^{p-1}\me(e^{-uS_{N-j}})
      \prod_{i=1}^j\me(X^{p_i}e^{-uX})\,{\rm d}u,
   \end{equation}
   where $p:=p_1+\cdots+p_j$ and $S_0:=0$.
   Moreover, for any fixed $j\in\nz$ the asymptotics of the latter integral
   as $N\to\infty$ is determined by the values of $u$ close to $0$, i.e.,
   for any fixed $j\in\nz$ and $\delta>0$,
   \begin{equation} \label{fundamental2}
      \me(W_1^{p_1}\cdots W_j^{p_j})\ \sim\ \frac{1}{\Gamma(p)}
      \int_0^\delta u^{p-1}\me(e^{-uS_{N-j}})
      \prod_{i=1}^j\me(X^{p_i}e^{-uX})\,{\rm d}u,\qquad N\to\infty.
   \end{equation}
   In particular, for any fixed $\delta>0$
   \begin{equation} \label{fundamentalspecial}
      \frac{1}{N}\ =\ \me(W_1)
      \ \sim\ \int_0^\delta \me(Xe^{-uX})\me(e^{-uS_{N-1}})\,{\rm d}u,
   \qquad N\to\infty.
   \end{equation}
\end{lemma}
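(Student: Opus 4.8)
The plan is to prove the exact representation (\ref{fundamental1}) by the classical Gamma‑integral identity for $S_N^{-p}$, and then to deduce the asymptotic statements (\ref{fundamental2}) and (\ref{fundamentalspecial}) by splitting the $u$‑integral at $\delta$ and showing that the tail piece is geometrically small while the head piece is not. For (\ref{fundamental1}): since the $X_i$ are almost surely positive we have $S_N>0$ a.s., so with $p:=p_1+\cdots+p_j>0$ the identity $S_N^{-p}=\Gamma(p)^{-1}\int_0^\infty u^{p-1}e^{-uS_N}\,{\rm d}u$ holds a.s. I would multiply it by $X_1^{p_1}\cdots X_j^{p_j}$ (so that the left side becomes $W_1^{p_1}\cdots W_j^{p_j}$), take expectations, and interchange $\me$ and $\int_0^\infty$, which is legitimate by Tonelli's theorem since the integrand is nonnegative; this gives $\me(W_1^{p_1}\cdots W_j^{p_j})=\Gamma(p)^{-1}\int_0^\infty u^{p-1}\me(X_1^{p_1}\cdots X_j^{p_j}e^{-uS_N})\,{\rm d}u$. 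Then I would split $S_N=(X_1+\cdots+X_j)+(X_{j+1}+\cdots+X_N)$, use that the second block is independent of $(X_1,\ldots,X_j)$ and has the law of $S_{N-j}$ (with $S_0:=0$ when $j=N$), and that $X_1,\ldots,X_j$ are i.i.d.\ copies of $X$, so that the inner expectation factors as $\me(e^{-uS_{N-j}})\prod_{i=1}^j\me(X^{p_i}e^{-uX})$. This is exactly (\ref{fundamental1}); note $\me(X^{p_i}e^{-uX})<\infty$ for every $u>0$ since $x^{p_i}e^{-ux}\le(p_i/(ue))^{p_i}$.

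For (\ref{fundamental2}) write $\psi(u):=\me(e^{-uX})$ and $\psi_{p_i}(u):=\me(X^{p_i}e^{-uX})$, so $\me(e^{-uS_{N-j}})=\psi(u)^{N-j}$ and the integrand in (\ref{fundamental1}) is $u^{p-1}\psi(u)^{N-j}\prod_{i=1}^j\psi_{p_i}(u)$. Put $A_N:=\int_0^\delta(\cdots)\,{\rm d}u$ and $B_N:=\int_\delta^\infty(\cdots)\,{\rm d}u$, so $\Gamma(p)\,\me(W_1^{p_1}\cdots W_j^{p_j})=A_N+B_N\le\Gamma(p)$ because $0\le W_i\le 1$; in particular $A_N<\infty$. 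Two elementary Tonelli computations (again using the Gamma identity for a sum of positive variables), combined with $X_i\le X_1+\cdots+X_j$ and $\sum_ip_i=p$ so that $\prod_{i=1}^jX_i^{p_i}\le(X_1+\cdots+X_j)^{p}$, give $\int_0^\infty u^{p-1}\prod_{i=1}^j\psi_{p_i}(u)\,{\rm d}u=\Gamma(p)\,\me\big(X_1^{p_1}\cdots X_j^{p_j}(X_1+\cdots+X_j)^{-p}\big)\le\Gamma(p)$ and, with an independent copy $X_0$ of $X$, $\int_0^\infty u^{p-1}\psi(u)\prod_{i=1}^j\psi_{p_i}(u)\,{\rm d}u=\Gamma(p)\,\me\big(X_1^{p_1}\cdots X_j^{p_j}(X_0+X_1+\cdots+X_j)^{-p}\big)\le\Gamma(p)$. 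From the first estimate, $C_\delta:=\int_0^{\delta/2}u^{p-1}\prod_{i=1}^j\psi_{p_i}(u)\,{\rm d}u\in(0,\infty)$. Since $\psi$ is strictly decreasing ($\psi'(u)=-\me(Xe^{-uX})<0$), for $u\ge\delta$ and $N\ge j+1$ one has $\psi(u)^{N-j}\le\psi(\delta)^{N-j-1}\psi(u)$, so the second estimate yields $B_N\le\psi(\delta)^{N-j-1}\int_0^\infty u^{p-1}\psi(u)\prod_{i=1}^j\psi_{p_i}(u)\,{\rm d}u\le\Gamma(p)\,\psi(\delta)^{N-j-1}$.

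For the matching lower bound on the head, on $(0,\delta/2)$ we have $\psi(u)\ge\psi(\delta/2)$, hence (as $N-j\ge0$) $A_N\ge\psi(\delta/2)^{N-j}C_\delta$. Dividing, $B_N/A_N\le(\Gamma(p)/C_\delta)\,\psi(\delta)^{N-j-1}\psi(\delta/2)^{-(N-j)}\to0$ as $N\to\infty$, because $0<\psi(\delta)<\psi(\delta/2)$. Therefore $\Gamma(p)\,\me(W_1^{p_1}\cdots W_j^{p_j})=A_N+B_N=A_N(1+o(1))$, which is precisely (\ref{fundamental2}); and (\ref{fundamentalspecial}) is the special case $j=1$, $p_1=p=1$ of (\ref{fundamental2}) together with the identity $\me(W_1)=1/N$.

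The step I expect to be the main obstacle is the tail bound on $B_N$. The naive estimate $\psi(u)^{N-j}\le\psi(\delta)^{N-j}$ on $[\delta,\infty)$ is too lossy, since $\int_\delta^\infty u^{p-1}\prod_{i=1}^j\psi_{p_i}(u)\,{\rm d}u$ may well diverge (each $\psi_{p_i}(u)$ decays only like $u^{-p_i}$ as $u\to\infty$). Retaining one factor $\psi(u)$ before estimating, and recognizing that the leftover integral equals $\Gamma(p)$ times a joint moment of the weights of the same model with $j+1$ individuals — hence is automatically at most $\Gamma(p)$ — is precisely what makes $B_N$ geometrically small; pairing this with the crude but valid lower bound $A_N\ge C_\delta\,\psi(\delta/2)^{N-j}$, which only uses strict monotonicity of $\psi$ near $\delta$, then closes the argument. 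The remaining ingredients — the Gamma‑integral representation, the appeals to Tonelli, and the factorization via independence — are routine.
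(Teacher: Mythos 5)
Your proof is correct and follows essentially the same route as the paper: the Gamma-integral identity plus Tonelli and independence for (\ref{fundamental1}), then a split of the $u$-integral at $\delta$ with the tail bounded above by a constant times $(\psi(\delta))^{N-j-1}$ and the head bounded below by a constant times $(\psi(\delta/2))^{N-j}$, which is exactly the paper's argument (the paper uses exponent $N-j$ in the tail bound). The only substantive remark is that the ``main obstacle'' you flag is not actually there: your own first Tonelli identity gives $\int_\delta^\infty u^{p-1}\prod_{i=1}^j\psi_{p_i}(u)\,{\rm d}u\le\Gamma(p)\,\me\bigl(\prod_{i=1}^j(X_i/(X_1+\cdots+X_j))^{p_i}\bigr)\le\Gamma(p)<\infty$, so the ``naive'' bound $\psi(u)^{N-j}\le\psi(\delta)^{N-j}$ already suffices --- this finiteness is precisely the paper's unproved assertion that its constant $c_1$ is finite, which your computation nicely justifies, making the extra retained factor of $\psi(u)$ unnecessary (though harmless).
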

\begin{remark}
   The fundamental relation (\ref{fundamental1}) is well-known from several
   references (see, for example, Cortines \cite[Proposition 4.4]{cortines}
   or Huillet \cite{huillet}).
\end{remark}
\begin{proof}
   Let $j\in\{1,\ldots,N\}$ and $p_1,\ldots,p_j>0$. From
   $S_N^{-p}=(\Gamma(p))^{-1}\int_0^\infty u^{p-1}e^{-uS_{N}}\,{\rm d}u$
   it follows that
   \begin{eqnarray*}
      \me(W_1^{p_1}\cdots W_j^{p_j})
      & = & \me(X_1^{p_1}\cdots X_j^{p_j}S_N^{-p})
      \ = \ \frac{1}{\Gamma(p)}\int_0^\infty u^{p-1}\me(X_1^{p_1}\cdots X_j^{p_j}e^{-uS_N})\,{\rm d}u\\
      & = & \frac{1}{\Gamma(p)}\int_0^\infty u^{p-1}\me(e^{-uS_{N-j}})\prod_{i=1}^j\me(X^{p_i}e^{-uX})\,{\rm d}u,
   \end{eqnarray*}
   which is (\ref{fundamental1}). To check (\ref{fundamental2})
   fix $j\in\nz$ and $\delta>0$ and let $\psi$ denote the Laplace transform
   of $X$. Decompose $\me(W_1^{p_1}\cdots W_j^{p_j})=A_N+B_N$ with
   \[
   A_N\ :=\ \frac{1}{\Gamma(p)}\int_0^\delta u^{p-1}
   \me(e^{-uS_{N-j}})\prod_{i=1}^j\me(X^{p_i}e^{-uX})\,{\rm d}u
   \]
   and
   \[
   B_N\ :=\ \frac{1}{\Gamma(p)}\int_\delta^\infty u^{p-1}\me(e^{-uS_{N-j}})\prod_{i=1}^j\me(X^{p_i}e^{-uX})\,{\rm d}u.
   \]
   The map $u\mapsto\me(e^{-uS_{N-1}})$ is non-increasing on
   $[0,\infty)$. Thus,
   \[
   B_N
   \ \le\ \me(e^{-\delta S_{N-j}})\frac{1}{\Gamma(p)}
          \int_\delta^\infty u^{p-1}\prod_{i=1}^j\me(X^{p_i}e^{-uX})\,{\rm d}u
   \ = \ c_1(\psi(\delta))^{N-j}
   \]
   and
   \[
   A_N
   \ \ge\ \frac{1}{\Gamma(p)}\int_0^{\delta/2}
          u^{p-1}\me(e^{-uS_{N-j}})\prod_{i=1}^j\me(X^{p_i}e^{-uX})\,{\rm d}u
   \ \ge \ c_2(\psi(\delta/2))^{N-j}
   \]
   with constants
   \[
   c_1\ :=\ c_1(p_1,\ldots,p_j,\delta)
   \ :=\ \frac{1}{\Gamma(p)}\int_\delta^\infty u^{p-1}\prod_{i=1}^j
   \me(X^{p_i}e^{-uX})\,{\rm d}u
   \]
   and
   \[
   c_2\ :=\ c_2(p_1,\ldots,p_j,\delta)
   \ :=\ \frac{1}{\Gamma(p)}\int_0^{\delta/2}u^{p-1}
   \prod_{i=1}^j\me(X^{p_i}e^{-uX})\,{\rm d}u.
   \]
   Note that $0<c_1,c_2<\infty$ and that $c_1$ and $c_2$ do not depend on $N$.
   Thus,
   \[
   1\ \le\ \frac{\me(W_1^{p_1}\cdots W_j^{p_j})}{A_N}
   \ =\ 1 + \frac{B_N}{A_N}
   \ \le\ 1 + \frac{c_1(\psi(\delta))^{N-j}}{c_2(\psi(\delta/2))^{N-j}}
   \ =\ 1 + \frac{c_1}{c_2}\bigg(\frac{\psi(\delta)}{\psi(\delta/2)}\bigg)^{N-j}
   \ \to\ 1
   \]
   as $N\to\infty$, since $\psi(\delta/2)>\psi(\delta)$. Eq.~(\ref{fundamentalspecial}) follows from by choosing $j:=1$ and $p_1:=1$ in (\ref{fundamental2}).\hfill$\Box$
\end{proof}
We now turn to the proofs of the parts (ii) and (iii) of Theorem \ref{main}.
We first consider part (iii) ($1<\alpha<2$). The boundary case $\alpha=2$
(part (ii) of Theorem \ref{main}) will be studied afterwards.
\begin{proof} (of Theorem \ref{main} (iii))
   The idea of the proof is to apply the general convergence result
   \cite[Theorem 2.1]{moehlesagitov}. Having (\ref{transprob}) in mind the
   main task is to derive the asymptotics of the moments of $W_1$ or, more
   generally, the asymptotics of the joint moments of the random
   variables $W_1,\ldots,W_j$ as $N\to\infty$. The following
   proof is based on Schweinsberg's \cite{schweinsberg3} method.
%
%
   We first verify that
   \begin{equation} \label{momlimit}
   \lim_{N\to\infty}\frac{(\mu N)^\alpha}{\ell(N)}\me(W_1^k)\ =\
   \alpha{\rm B}(k-\alpha,\alpha),\qquad k\in\nz\setminus\{1\}.
   \end{equation}
   For all $\lambda>\mu:=\me(X)$, by the law of large numbers,
   $\pr(S_{N-1}\le\lambda N)\to 1$ as $N\to\infty$. Thus,
   \begin{eqnarray*}
      \me(W_1^k)
      & \ge & \me(W_1^k1_{\{X_2+\cdots+X_N\le\lambda N\}})
      \ \ge \ \me\bigg(\bigg(\frac{X_1}{X_1+\lambda N}\bigg)^k\bigg)
              \pr(X_2+\cdots+X_N\le\lambda N)\\
      & \sim & \me\bigg(\bigg(\frac{X}{X+\lambda N}\bigg)^k\bigg)
      \ \sim\ \alpha{\rm B}(k-\alpha,\alpha)\frac{\ell(N)}{(\lambda N)^\alpha},
      \qquad N\to\infty,
   \end{eqnarray*}
   where the last asymptotics holds by
   Lemma \ref{lem2} and since $\ell$ is slowly varying at $\infty$. Multiplication with $N^\alpha/\ell(N)$
   and taking $\liminf$ shows that
   $\liminf_{N\to\infty}N^\alpha/\ell(N)\me(W_1^k)\ge \alpha {\rm B}(k-\alpha,\alpha)/\lambda^\alpha$. Letting $\lambda\downarrow\mu$
   it follows that
   $\liminf_{N\to\infty}N^\alpha/\ell(N)\me(W_1^k)\ge \alpha{\rm B}(k-\alpha,\alpha)/\mu^\alpha$.

   To handle the $\limsup$ fix $a\in(0,\mu)$ and decompose
   \[
   \me(W_1^k)\ =\ \me(W_1^k1_{\{X_2+\cdots+X_N\le aN\}}) +
   \me(W_1^k1_{\{X_2+\cdots+X_N>aN\}}).
   \]
   From Lemma
   \ref{lem1} it follows that there exists $N_0\in\nz$ and $q\in(0,1)$ such that $\pr(S_{N-1}\le aN)\le q^N$
   for all $N>N_0$. Thus,
   $\me(W_1^k1_{\{X_2+\cdots+X_N\le aN\}})\le\pr(X_2+\cdots+X_N\le aN)=\pr(S_{N-1}\le aN)\le q^N$ for all
   $N\in\nz$ with $N>N_0$. It hence suffices to verify that
   \begin{equation} \label{mylocal}
      \limsup_{N\to\infty}\frac{(\mu N)^\alpha}{\ell(N)}\me(W_1^k1_{\{X_2+\cdots+X_N>aN\}})\ =\ \alpha{\rm B}(k-\alpha,\alpha).
   \end{equation}
   In order to see this let $\lambda\in (a,\mu)$ and decompose
   \begin{eqnarray*}
      &   & \hspace{-15mm}\me(W_1^k1_{\{X_2+\cdots+X_N>aN\}})
      \ = \ \me(W_1^k1_{\{aN<X_2+\cdots+X_N\le\lambda N\}})
            + \me(W_1^k1_{\{X_2+\cdots+X_N>\lambda N\}})\\
      & \le & \me\bigg(\bigg(\frac{X_1}{X_1+aN}\bigg)^k\bigg)\pr(S_{N-1}\le\lambda N) + \me\bigg(\bigg(\frac{X_1}{X_1+\lambda N}\bigg)^k\bigg)\pr(S_{N-1}>\lambda N).
   \end{eqnarray*}
   The two expectations on the right hand side are both $O(\ell(N)/N^\alpha)$ by
   Lemma \ref{lem2}. Moreover, $\pr(S_{N-1}\le\lambda N)\to 0$ and $\pr(S_{N-1}>\lambda N)\to 1$ as $N\to\infty$. Therefore, only the last term contributes to the $\limsup$ and we obtain
   \begin{eqnarray*}
      \limsup_{N\to\infty}\frac{N^\alpha}{\ell(N)}\me(W_1^k1_{\{X_2+\cdots+X_N>aN\}})
      & \le & \limsup_{N\to\infty} \frac{N^\alpha}{\ell(N)}
      \me\bigg(\bigg(\frac{X_1}{X_1+\lambda N}\bigg)^k\bigg)\pr(S_{N-1}>\lambda N)\\
      & \sim & \frac{N^\alpha}{\ell(N)}\alpha {\rm B}(k-\alpha,\alpha)
      \frac{\ell(\lambda N)}{(\lambda N)^\alpha}
      \ =\ \alpha{\rm B}(k-\alpha,\alpha) /\lambda^\alpha.
   \end{eqnarray*}
   Letting $\lambda\uparrow\mu$ shows that (\ref{mylocal}) holds.
   Thus, (\ref{momlimit}) is established.

   Choosing $k=2$ in (\ref{momlimit}) yields the asymptotic formula for
   the coalescence probability $c_N=N\me(W_1^2)$ stated in Theorem
   \ref{main} (iii). In particular, $c_N=O(\ell(N)/N^{\alpha-1})$.
   In summary we conclude that
   \[
   \frac{\Phi_1^{(N)}(k)}{c_N}
   \ =\ \frac{\me(W_1^k)}{\me(W_1^2)}
   \ \to\ \frac{\Gamma(k-\alpha)}{\Gamma(k)\Gamma(2-\alpha)}
   \ =\ \int_{(0,1)} x^{k-2}\Lambda({\rm d}x),\qquad N\to\infty,
   \]
   where $\Lambda:=\beta(2-\alpha,\alpha)$ denotes the beta distribution
   with parameters $2-\alpha$ and $\alpha$.

   Moreover,
   \begin{eqnarray*}
      \me(W_1^2W_2^21_{\{S_N>aN\}})
      & \le & \me\bigg(\frac{X_1^2X_2^2}{(X_1\vee aN)^2(X_2\vee aN)^2}\bigg)\\
      & = & \bigg(\me\bigg(\frac{X^2}{(X\vee aN)^2}\bigg)\bigg)^2
      \ \sim\ \bigg(\frac{2}{2-\alpha}
           \frac{\ell(aN)}{(aN)^\alpha}\bigg)^2
      \ = \ O\bigg(\frac{(\ell(N))^2}{N^{2\alpha}}\bigg).
   \end{eqnarray*}
   Since $c_N\ge K\ell(N)/N^{\alpha-1}$ for some constant $K>0$ it follows that
   $\Phi_2^{(N)}(2,2)/c_N=O(\ell(N)/N^{\alpha-1})=O(c_N)\to 0$ as $N\to\infty$.
   Thus, for all $j,k_1,\ldots,k_j\in\nz\setminus\{1\}$,
   $\Phi_j^{(N)}(k_1,\ldots,k_j)/c_N\le
   \Phi_2^{(N)}(2,2)/c_N\to 0$ as $N\to\infty$.
   By \cite[Theorem 2.1]{moehlesagitov}, the model is in the
   domain of attraction of the $\beta(2-\alpha,\alpha)$-coalescent.\hfill$\Box$
\end{proof}
We now turn to the boundary case $\alpha=2$, so we prove part (ii) of Theorem
\ref{main}.
\begin{proof} (of Theorem \ref{main} (ii))
   For all $x>0$,
   \begin{eqnarray*}
      \me(X^21_{\{X\le x\}})
      & = & \int_0^\infty \pr(X^21_{\{X\le x\}}>y)\,{\rm d}y
      \ = \ \int_0^\infty 2t\pr(X^21_{\{X\le x\}}>t^2)\,{\rm d}t\\
      & = & \int_0^x 2t\pr(t<X\le x)\,{\rm d}t
      \ = \ \int_0^x 2t\big(\pr(X>t) - \pr(X>x)\big)\,{\rm d}t\\
      & = & \int_0^x 2t\pr(X>t)\,{\rm d}t - x^2\pr(X>x).
   \end{eqnarray*}
   Since $\int_0^xt\pr(X>t)\,{\rm d}t\sim\int_1^x\ell(t)/t\,{\rm d}t=\ell^*(x)$,
   $x^2\pr(X>x)\sim\ell(x)$, and $\ell(x)/\ell^*(x)\to 0$ as $x\to\infty$,
   it follows that $\me(X^21_{\{X\le x\}})\sim 2\ell^*(x)$ as $x\to\infty$.
   Thus, relation (2.3c) of Bingham and Doney \cite{binghamdoney} holds with
   $n=1$ and $L:=\ell^*$. This relation is equivalent (see (2.4) in Theorem A of
   \cite{binghamdoney})
   to $\psi''(u)\sim 2\ell^*(1/u)$ as $u\to 0$.

   Recall that $c_N=N\me(W_1^2)$.
   We now verify the asymptotic relation $c_N\sim 2\mu^{-2}\ell^*(N)/N$ as
   $N\to\infty$ or, equivalently, that
   \begin{equation} \label{equivalently}
      \lim_{N\to\infty}\frac{N^2}{\ell^*(N)}\me(W_1^2)\ =\ \frac{2}{\mu^2}.
   \end{equation}
   We have
   \[
   \me(W_1^2)\ =\ \int_0^\infty u\psi''(u)\me(e^{-uS_{N-1}})\,{\rm d}u
   \ =\ \frac{1}{N^2}\int_0^\infty t\psi''(t/N)\me(e^{-tS_{N-1}/N})\,{\rm d}t.
   \]
   Multiplication with $N^2/\ell^*(N)$ and Fatou's lemma yields
   \[
   \liminf_{N\to\infty}\frac{N^2}{\ell^*(N)}\me(W_1^2)
   \ \ge\ \int_0^\infty t\liminf_{N\to\infty}
         \frac{\psi''(t/N)}{\ell^*(N)}\me(e^{-tS_{N-1}/N})\,{\rm d}t
   \ =\ \int_0^\infty 2te^{-\mu t}\,{\rm d}t
   \ =\ \frac{2}{\mu^2},
   \]
   since $\psi''(t/N)\sim 2\ell^*(N/t)\sim 2\ell^*(N)$ and
   $\me(e^{-tS_{N-1}/N})\to e^{-\mu t}$ as $N\to\infty$. To see that
   $\limsup_{N\to\infty}\frac{N^2}{\ell^*(N)}\me(W_1^2)\le 2/\mu^2$
   fix $a\in (0,\mu)$. By Lemma \ref{lem1} there exists $N_0\in\nz$ and
   $q\in (0,1)$ such that $\pr(S_{N-1}\le aN)\le q^N$ for all $N\in\nz$ with
   $N>N_0$. Noting that $\me(W_1^21_{ \{X_2+\cdots+X_N\le aN\}})\le
   \pr(S_{N-1}\le aN)\le q^N$ it suffices to verify that
   \begin{equation} \label{suffices}
      \limsup_{N\to\infty}\frac{N^2}{\ell^*(N)}\me(W_1^21_{\{X_2+\cdots+X_N>aN\}})
      \ \le\ \frac{2}{\mu^2}.
   \end{equation}
   In order to see this let $\lambda\in (a,\mu)$ and decompose
   $\me(W_1^21_{\{X_2+\cdots+X_N>aN\}})=\me(W_1^21_{A_N})+\me(W_1^21_{B_N})$,
   where $A_N:=\{aN<X_2+\cdots+X_N\le\lambda N\}$ and $B_N:=\{X_2+\cdots+X_N>\lambda N\}$.
   We have
   \begin{eqnarray*}
      \frac{N^2}{\ell^*(N)}\me(W_1^21_{A_N})
      & = & \frac{N^2}{\ell^*(N)}\int_0^\infty
            u\psi''(u)\me(e^{-uS_{N-1}}1_{\{aN<S_{N-1}\le\lambda N\}})
            \,{\rm d}u\\
      & \le & \pr(S_{N-1}\le\lambda N)\frac{N^2}{\ell^*(N)}\int_0^\infty u\psi''(u) e^{-uaN}\,{\rm d}u\\
      & = & \pr(S_{N-1}\le\lambda N)\frac{1}{\ell^*(N)}\int_0^\infty t\psi''(t/N)e^{-at}\,{\rm d}t\\
      & \sim & \pr(S_{N-1}\le\lambda N)\frac{1}{\ell^*(N)}\psi''(1/N)\int_0^\infty te^{-at}\,{\rm d}t\\
      & \sim & \pr(S_{N-1}\le\lambda N)\frac{2}{a^2}
      \ \to\ 0,\qquad N\to\infty,
   \end{eqnarray*}
   where the second last asymptotics holds by Theorem 3 of Karamata
   \cite{karamata3}, applied with $f(t):=te^{-at}$ and $\varphi:=\psi''$,
   which is slowly varying at $0$. For the second part we obtain
   \begin{eqnarray*}
      \frac{N^2}{\ell^*(N)}\me(W_1^21_{B_N})
      & = & \frac{N^2}{\ell^*(N)}\int_0^\infty
            u\psi''(u)\me(e^{-uS_{N-1}}1_{\{S_{N-1}>\lambda N\}})\,{\rm d}u\\
      & \le & \frac{N^2}{\ell^*(N)}\int_0^\infty u\psi''(u)e^{-u\lambda N}\,{\rm d}u
      \ =\ \frac{1}{\ell^*(N)}\int_0^\infty t\psi''(t/N) e^{-\lambda t}\,{\rm d}t\\
      & \sim & \frac{1}{\ell^*(N)}\psi''(1/N)\int_0^\infty te^{-\lambda t}\,{\rm d}t
      \ \sim\ \frac{2}{\lambda^2},
   \end{eqnarray*}
   where the second last asymptotics holds again by Theorem 3 of Karamata
   \cite{karamata3}, now applied with $f(t):=te^{-\lambda t}$ and
   $\varphi:=\psi''$. Therefore,
   \begin{eqnarray*}
      &   & \hspace{-15mm}\limsup_{N\to\infty}
            \frac{N^2}{\ell^*(N)}\me(W_1^21_{\{X_2+\cdots+X_N>aN\}})\\
      & \le & \limsup_{N\to\infty}\frac{N^2}{\ell^*(N)} \me(W_1^21_{A_N}) +
          \limsup_{N\to\infty}\frac{N^2}{\ell^*(N)}\me(W_1^21_{B_N})
      \ \le \ 0 + \frac{2}{\lambda^2}\ =\ \frac{2}{\lambda^2}.
   \end{eqnarray*}
   Letting $\lambda\uparrow\mu$ shows that (\ref{suffices}) holds. Thus,
   (\ref{equivalently}) is established. The rest of the proof now works
   as follows. By the monotone density theorem (Lemma \ref{mdt}), applied
   with $\rho=0$,
   \[
   \frac{-u\psi'''(u)}{\psi''(u)}
   \ \sim\ \frac{-u\psi'''(u)}{2\ell^*(1/u)}\ \to\ 0,\qquad u\to 0.
   \]
   Thus, for every $\varepsilon>0$ there exists $\delta=\delta(\varepsilon)>0$
   such that $-u\psi'''(u)\le\varepsilon \psi''(u)$ for all $u\in (0,\delta)$.
   Therefore, together with Lemma \ref{fundamental}, as $N\to\infty$,
   \[
      \me(W_1^3)
      \ \sim\ \frac{1}{2}\int_0^\delta
            u^2(-\psi'''(u))(\psi(u))^{N-1}\,{\rm d}u
      \ \le\ \frac{\varepsilon}{2}\int_0^\delta
            u\psi''(u)(\psi(u))^{N-1}\,{\rm d}u
      \ \sim \ \frac{\varepsilon}{2}\me(W_1^2).
   \]
   Thus, $\limsup_{N\to\infty}\me(W_1^3)/\me(W_1^2)\le\varepsilon/2$.
   Since $\varepsilon$ can be chosen arbitrarily small, it follows that
   $\lim_{N\to\infty}\Phi_1^{(N)}(3)/c_N=\lim_{N\to\infty}\me(W_1^3)/\me(W_1^2)=0$,
   which is equivalent (see, for example, \cite[Section 4]{moehletotal}) to
   the property that the model is in the domain of attraction of the Kingman
   coalescent.\hfill$\Box$
\end{proof}
We now turn to the proofs of the three remaining parts (iv) - (vi) of
Theorem \ref{main}. We first consider the case $0<\alpha<1$ corresponding
to part (v) of Theorem \ref{main}. The boundary cases (iv) ($\alpha=1$)
and (vi) ($\alpha=0$) will be considered afterwards. Assume that
$0<\alpha<1$. Then (\ref{maincond}) is exactly Eq.~(2.3b) of Bingham and
Doney \cite{binghamdoney} with $n=0$, $\beta=\alpha\in (0,1)$ and
$L(x):=\Gamma(1-\alpha)\ell(x)$.
By \cite[Theorem A]{binghamdoney}, (\ref{maincond}) is hence equivalent
(see \cite[Eq.~(2.1)]{binghamdoney}) to
$1-\psi(u)\sim u^\alpha L(1/u)=\Gamma(1-\alpha)u^\alpha\ell(1/u)$ as $u\to 0$.
\begin{proof} (of Theorem \ref{main} (v))
   For $k\in\nz_0$ and $x>0$ define $h_k(x):=x^k\pr(X>x)$.
   By (\ref{maincond}), $h_k(x)\sim x^{k-\alpha}\ell(x)$ as $x\to\infty$.
   Karamata's Tauberian theorem \cite[Theorem 1.7.6]{binghamgoldieteugels},
   applied with $U:=h_k$, $\rho:=k-\alpha$ and $c:=\Gamma(\rho+1)$,
   yields for all $k\in\nz_0$ that $\widehat{h}_k(u)
   :=u\int_0^\infty e^{-ux}x^k\pr(X>x)\,{\rm d}x\sim
   \Gamma(k-\alpha+1)u^{\alpha-k}\ell(1/u)$ as $u\to 0$. Thus, by
   (\ref{meanf}), for all $k\in\nz$,
   \begin{eqnarray}
      \varphi_k(u)
      & := & \me(X^ke^{-uX})
      \ = \ \int_0^\infty
            \frac{{\rm d}}{{\rm d}x}(x^ke^{-ux})\pr(X>x)\,{\rm d}x\nonumber\\
      & = & \int_0^\infty (kx^{k-1}e^{-ux}-ux^ke^{-ux})\pr(X>x)\,{\rm d}x
      \ = \ \frac{k}{u}\widehat{h}_{k-1}(u) - \widehat{h}_k(u)\nonumber\\
      & \sim & \frac{k}{u}\Gamma(k-\alpha)u^{\alpha-(k-1)}\ell(1/u)
            - \Gamma(k-\alpha+1)u^{\alpha-k}\ell(1/u)\nonumber\\
      & = & \alpha\Gamma(k-\alpha)u^{\alpha-k}\ell(1/u),\qquad u\to 0.
      \label{asy2}
   \end{eqnarray}
   We now turn to the joint moments of $W_1,\ldots,W_j$.
   Let $a_1,a_2,\ldots$ be positive real numbers satisfying
   $L(a_N)\sim a_N^\alpha/N$ as $N\to\infty$.
   Moreover, fix some $\delta\in(0,\infty)$. The exact value
   of $\delta$ is irrelevant but it is important that $\delta$ is finite.
   Let $j,k_1,\ldots,k_j\in\nz$. Define $k:=k_1+\cdots+k_j$. By Lemma \ref{fundamental}, as $N\to\infty$,
   \begin{eqnarray*}
   \Phi_j^{(N)}(k_1,\ldots,k_j)
   & = & (N)_j\me(W_1^{k_1}\cdots W_j^{k_j})
   \ \sim \ \frac{N^j}{\Gamma(k)}\int_0^\delta u^{k-1}
         \me(e^{-uS_{N-j}})\prod_{i=1}^j\varphi_{k_i}(u) \,{\rm d}u\\
   & = & \frac{N^j}{\Gamma(k)a_N^k}
         \int_0^{\delta a_N} t^{k-1}\me(e^{-tS_{N-j}/a_N})
         \prod_{i=1}^j \varphi_{k_i}(t/a_N)\,{\rm d}t.
   \end{eqnarray*}
   Corollary \ref{karamatacorollary}, an Abelian result \'a la Karamata provided in the appendix for convenience, applied with $x_N:=1/a_N$, $f_N(t):=t^{k-1}\me(e^{-tS_{N-j}/a_N})1_{(0,\delta a_N)}(t)$
   and $\varphi:=\prod_{i=1}^j\varphi_{k_i}$, which is regularly varying at
   $0$ with index $\sum_{i=1}^j(\alpha-k_i)=j\alpha-k$, yields
   \begin{equation} \label{kara}
      \Phi_j^{(N)}(k_1\ldots,k_j)
      \ \sim\ \frac{N^j\prod_{i=1}^j\varphi_{k_i}(1/a_N)}{\Gamma(k)a_N^k}
      \int_0^{\delta a_N} t^{j\alpha-1}\me(e^{-tS_{N-j}/a_N})\,{\rm d}t,
      \qquad N\to\infty.
   \end{equation}
   In the following the asymptotic relation (\ref{kara}) is used
   to verify by induction on $j\in\nz$ that, for all
   $k_1,\ldots,k_j\in\nz$,
   \begin{equation} \label{phiconv}
      \lim_{N\to\infty}\Phi_j^{(N)}(k_1\ldots,k_j)
      \ =\ \alpha^{j-1}\frac{\Gamma(j)}{\Gamma(k)}\prod_{i=1}^j \frac{\Gamma(k_i-\alpha)}{\Gamma(1-\alpha)}.
   \end{equation}
   Since $\Phi_1^{(N)}(1)=N\me(W_1)=1$, the choice $j=k_1=1$ in (\ref{kara}) yields
   \begin{equation} \label{kara1}
      \int_0^{\delta a_N} t^{\alpha-1}\me(e^{-tS_{N-1}/a_N})\,{\rm d}t
      \ \sim\ \frac{a_N}{N\varphi_1(1/a_N)}\ \sim\ \frac{1}{\alpha},
      \qquad N\to\infty,
   \end{equation}
   where the last asymptotics holds, since $\varphi_1(1/a_N)\sim
   \alpha\Gamma(1-\alpha)a_N^{1-\alpha}\ell(a_N)$ and $a_N^\alpha/N\sim L(a_N)
   =\Gamma(1-\alpha)\ell(N)$. Note that in (\ref{kara1}) it is important that
   $\delta<\infty$ because otherwise the integral on the left hand side of
   (\ref{kara1}) could take the value $\infty$.
   For $j=1$ and $k_1=k\in\nz$, (\ref{kara}) thus reduces to
   \[
   \Phi_1^{(N)}(k)
   \ \sim\ \frac{N\varphi_k(1/a_N)}{\Gamma(k)a_N^k}\frac{1}{\alpha}
   \ \sim\ \frac{\Gamma(k-\alpha)}{\Gamma(k)\Gamma(1-\alpha)},\qquad N\to\infty,
   \]
   which shows that (\ref{phiconv}) holds for $j=1$.
   In particular, $c_N=\Phi_1^{(N)}(2)\to 1-\alpha>0$ as $N\to\infty$.
   The induction step from $j-1$ to $j$ ($\ge 2$) works as follows.
   By the consistency relation (\ref{consistent}) and the induction
   hypothesis,
   \[
   \Phi_j^{(N)}(1,\ldots,1)
   \ =\ \Phi_{j-1}^{(N)}(1,\ldots,1)-(j-1)\Phi_{j-1}^{(N)}(2,1,\ldots,1)
   \ \to\
   \alpha^{j-2}-\alpha^{j-2}(1-\alpha)=\alpha^{j-1}.
   \]
   Thus, (\ref{phiconv}) holds for $k_1=\cdots=k_j=1$ and the choice
   $k_1=\cdots=k_j=1$ in (\ref{kara}) yields
   \[
   \int_0^{\delta a_N} t^{j\alpha-1}\me(e^{-tS_{N-j}/a_N})\,{\rm d}t\ \sim\
   \ \frac{\Gamma(j)a_N^j}{N^j(\varphi_1(1/a_N))^j}\alpha^{j-1}
   \ \sim\ \frac{\Gamma(j)}{\alpha},
   \qquad N\to\infty.
   \]
   Therefore, (\ref{kara}) reduces to
   \begin{eqnarray*}
   \Phi_j^{(N)}(k_1,\ldots,k_j)
   & \sim & \frac{N^j\prod_{i=1}^j\varphi_{k_i}(1/a_N)}{\Gamma(k)a_N^k}
            \frac{\Gamma(j)}{\alpha}\\
   & \sim & \frac{N^j\Gamma(j)\prod_{i=1}^j
            (\alpha\Gamma(k_i-\alpha)a_N^{k_i-\alpha}\ell(a_N))}
            {\alpha\Gamma(k)a_N^k}\\
   & = & \alpha^{j-1}\frac{\Gamma(j)}{\Gamma(k)}
         \Big(\frac{N\Gamma(1-\alpha)\ell(a_N)}{a_N^\alpha}\Big)^j
         \prod_{i=1}^j \frac{\Gamma(k_i-\alpha)}{\Gamma(1-\alpha)}\\
   & \to & \alpha^{j-1}\frac{\Gamma(j)}{\Gamma(k)}\prod_{i=1}^j
         \frac{\Gamma(k_i-\alpha)}{\Gamma(1-\alpha)}
         \ =:\ \phi_j(k_1,\ldots,k_j),
   \end{eqnarray*}
   since $N\Gamma(1-\alpha)\ell(a_N)=NL(a_N)\sim a_N^\alpha$ as $N\to\infty$.
   The induction is complete.\\
   In summary, $\Phi_j^{(N)}(k_1,\ldots,k_j)\to\phi_j(k_1,\ldots,k_j)$
   as $N\to\infty$
   for all $j,k_1,\ldots,k_j\in\nz$. The quantities $\phi_j(k_1,\ldots,k_j)$ are (see, \cite[Eq.~(16)]{moehleproper}
   for the analog formula for the rates of the continuous-time
   Poisson--Dirichlet coalescent) the transition probabilities of the
   discrete-time two-parameter Poisson--Dirichlet coalescent with parameters
   $\alpha$ and $0$. The convergence result (v) of Theorem \ref{main}
   therefore follows from \cite[Theorem 2.1]{moehlesagitov}.\hfill$\Box$
\end{proof}
Let us now turn to the (boundary) case $\alpha=1$, so we now assume that
$\pr(X>x)\sim x^{-1}\ell(x)$ as $x\to\infty$ for some function $\ell$
slowly varying at $\infty$.
\begin{proof} (of Theorem \ref{main} (iv))
   The proof has much in common with that of part (v). The details are
   however slightly different. For all $x>0$,
   \begin{eqnarray*}
      \me(X1_{\{X\le x\}})
      & = & \int_0^\infty \pr(X1_{\{X\le x\}}>t)\,{\rm d}t
      \ = \ \int_0^x \pr(t<X\le x)\,{\rm d}t\\
      & = & \int_0^x \big(\pr(X>t)-\pr(X>x)\big)\,{\rm d}t
      \ = \ \int_0^x \pr(X>t)\,{\rm d}t - x\pr(X>x).
   \end{eqnarray*}
   Since $\int_0^x \pr(X>t)\,{\rm d}t\sim\int_1^x \ell(t)/t\,{\rm d}t=
   \ell^*(x)$, $x\pr(X>x)\sim\ell(x)$ and $\ell(x)/\ell^*(x)\to 0$ as
   $x\to\infty$ it follows that $\me(X1_{\{X\le x\}})\sim\ell^*(x)$ as
   $x\to\infty$. Recall that $\ell^*$ is
   slowly varying at $\infty$.
   Thus, Eq.~(2.3c) of Bingham and Doney \cite{binghamdoney}
   holds with $n=0$ and $\alpha=\beta=1$ and $L:=\ell^*$,
   which is equivalent (see
   \cite[Theorem A, Eq.~(2.1)]{binghamdoney}) to
   \[
   1-\psi(u)\ \sim\ u\ell^*(1/u),\qquad u\to 0
   \] and as well
   (see \cite[Theorem A, Eq.~(2.4)]{binghamdoney}) equivalent to
   \[
   \varphi_1(u)\ :=\ \me(Xe^{-uX})\ =\ -\psi'(u)\ \sim\ \ell^*(1/u),
   \qquad u\to 0.
   \]
   For $k\in\nz\setminus\{1\}$, the asymptotic relation
   \begin{equation} \label{tilde}
      \varphi_k(u)\ :=\ \me(X^ke^{-uX})
      \ \sim\ \Gamma(k-1)u^{1-k}\ell(1/u),\qquad u\to 0,
   \end{equation}
   is verified exactly as in the proof of part (v) of Theorem \ref{main}.
   In particular, $\varphi_k$ is regularly varying at $0$ with index
   $1-k$, $k\in\nz_0$.\\
   We now turn to the joint moments of $W_1,\ldots,W_j$. Let
   $a_1,a_2,\ldots$ be positive real numbers satisfying
   $\ell^*(a_N)\sim a_N/N$ as $N\to\infty$. As in the proof of part (v) of
   Theorem \ref{main},
   fix some $\delta\in(0,\infty)$. Again, the exact value of $\delta$ is irrelevant but it
   is important that $\delta$ is finite.
   Let $j,k_1,\ldots,k_j\in\nz$. Define $k:=k_1+\cdots+k_j$. By Lemma \ref{fundamental}, as $N\to\infty$,
   \begin{eqnarray*}
      \me(W_1^{k_1}\cdots W_j^{k_j})
      & \sim & \frac{1}{\Gamma(k)}\int_0^\delta
          u^{k-1}\me(e^{-uS_{N-j}})\prod_{i=1}^j\varphi_{k_i}(u)\,{\rm d}u\\
      & = & \frac{1}{\Gamma(k)a_N^k}\int_0^{\delta a_N}
          t^{k-1}
          \me(e^{-tS_{N-j}/a_N})\prod_{i=1}^j\varphi_{k_i}(t/a_N)\,{\rm d}t.
   \end{eqnarray*}
   Corollary \ref{karamatacorollary}, applied with $x_N:=1/a_N$,
   $f_N(t):=t^{k-1}\me(e^{-tS_{N-j}/a_N})1_{(0,\delta a_N)}(t)$ and
   $\varphi:=\prod_{i=1}^j\varphi_{k_i}$, which is regularly varying
   at $0$ with index $\sum_{i=1}^j(1-k_i)=j-k$, shows that
   \begin{equation} \label{first}
      \me(W_1^{k_1}\cdots W_j^{k_j})
      \ \sim\ \frac{\prod_{i=1}^j\varphi_{k_i}(1/a_N)}{\Gamma(k)a_N^k}
      \int_0^{\delta a_N} t^{j-1}\me(e^{-tS_{N-j}/a_N})\,{\rm d}t,
      \qquad N\to\infty.
   \end{equation}
   Since $\me(W_1)=1/N$, the asymptotic relation (\ref{first}) turns for $j=k_1=1$ into
   $1/N\sim a_N^{-1}\varphi_1(1/a_N)
   \int_0^{\delta a_N}\me(e^{-tS_{N-1}/a_N})\,{\rm d}t\sim a_N^{-1}\ell^*(a_N)\int_0^{\delta a_N}
   \me(e^{-tS_{N-1}/a_N})\,{\rm d}t$, or, equivalently,
   \[
   \int_0^{\delta a_N} \me(e^{-tS_{N-1}/a_N})\,{\rm d}t\ \sim\ \frac{a_N}{N\ell^*(a_N)}
   \ \sim\ 1, \qquad N\to\infty.
   \]
   Therefore, for $j=1$ and $k=k_1\in\nz\setminus\{1\}$, (\ref{first}) reduces to
   \[
   \me(W_1^k)\ \sim\ \frac{\varphi_k(1/a_N)}{\Gamma(k)a_N^k}\
   \ \sim\  
      \frac{\ell(a_N)}{(k-1)a_N},\qquad N\to\infty,
   \]
   since $\varphi_k(1/a_N)\sim\Gamma(k-1)a_N^{k-1}\ell(a_N)$ by (\ref{tilde}).
   Thus, the coalescence probability $c_N$ satisfies
   \[
   c_N\ =\ N\me(W_1^2)\ \sim\
   \frac{N\ell(a_N)}{a_N}\ \sim\ \frac{\ell(a_N)}{\ell^*(a_N)}\ \to\ 0,
   \qquad N\to\infty,
   \]
   and
   \[
   \frac{\Phi_1^{(N)}(k)}{c_N}
   \ =\ \frac{\me(W_1^k)}{\me(W_1^2)}\ \to\  \frac{1}{k-1}
   \ =\ \int_{[0,1]} x^{k-2}\,\Lambda({\rm d}x),
   \qquad k\in\nz\setminus\{1\},
   \]
   where $\Lambda$ denotes the uniform distribution on $[0,1]$.
   To see that simultaneous multiple collisions cannot occur in the limit
   note that $(N)_2\me(W_1W_2)=1-c_N\sim 1$ as $N\to\infty$, or,
   equivalently, $\me(W_1W_2)\sim 1/N^2$ as $N\to\infty$.
   Thus, (\ref{first}) reduces for $j=2$ and $k_1=k_2=1$ to
   \[
   \frac{1}{N^2}\ \sim\ \frac{\varphi_1^2(1/a_N)}{a_N^2}\int_0^{\delta a_N} t\me(e^{-tS_{N-2}/a_N})\,{\rm d}t
   \ \sim\ \Big(\frac{\ell^*(a_N)}{a_N}\Big)^2\int_0^{\delta a_N} t\me(e^{-tS_{N-2}/a_N})\,{\rm d}t,
   \]
   or, equivalently,
   \begin{equation} \label{interim2}
      \int_0^{\delta a_N}t\me(e^{-tS_{N-2}/a_N})\,{\rm d}t\ \sim\
      \Big(\frac{a_N}{N\ell^*(a_N)}\Big)^2\ \sim\ 1,\qquad N\to\infty.
   \end{equation}
   Therefore, for $j=2$ and $k_1,k_2\in\nz\setminus\{1\}$, (\ref{first}) reduces to
   \[
   \me(W_1^{k_1}W_2^{k_2})
   \ \sim\ \frac{\varphi_{k_1}(1/a_N)\varphi_{k_2}(1/a_N)}{\Gamma(k)a_N^k}
   \ \sim\ \frac{\Gamma(k_1-1)\Gamma(k_2-1)}{\Gamma(k)}
           \Big(\frac{\ell(a_N)}{a_N}\Big)^2,
   \]
   where the last asymptotics holds since
   $\varphi_{k_i}(1/a_N)\sim\Gamma(k_i-1)a_N^{k_i-1}\ell(a_N)$ by (\ref{tilde}).
   In particular, $\Phi_2^{(N)}(2,2)=(N)_2\me(W_1^2W_2^2)\sim(N\ell(a_N)/a_N)^2/6
   \sim c_N^2/6$. For $j,k_1,\ldots,k_j\in\nz\setminus\{1\}$ it follows
   from the monotonicity property (\ref{monotone})
   that $\Phi_j^{(N)}(k_1,\ldots,k_j)\le\Phi_2^{(N)}(2,2)=O(c_N^2)$, and,
   therefore,
   $\lim_{N\to\infty}\Phi_2^{(N)}(k_1,\ldots,k_j)/c_N=0$, which shows that
   simultaneous multiple collisions cannot occur in the limit.\\
   To summarize, by \cite[Theorem 2.1]{moehlesagitov}, the model is
   in the domain of attraction of the $\Lambda$-coalescent with
   $\Lambda$ the uniform distribution on $[0,1]$, which is the Bolthausen--Sznitman
   coalescent.\hfill$\Box$
\end{proof}
\begin{remark}
   Suppose that (\ref{maincond}) holds with $\alpha\in(0,1)$. Using the same
   techniques as in the previous proof, it follows for all
   $j\in\nz$ and $k_1,\ldots,k_j\ge 2$ that
   \[
   \Phi_j^{(N)}(k_1,\ldots,k_j)
   \ =\ (N)_j\me(W_1^{k_1}\cdots W_j^{k_j})
   \ \sim\ \frac{\Gamma(j)\Gamma(k_1-1)\cdots\Gamma(k_j-1)}{\Gamma(k)}c_N^j,
   \qquad N\to\infty,
   \]
   where $c_N\sim N\ell(a_N)/a_N\sim\ell(a_N)/\ell^*(a_N)\to 0$ as
   $N\to\infty$. Thanks to the monotonicity property (\ref{monotone})
   this formula is only needed for $j\in\{1,2\}$ in the previous proof.
\end{remark}
We finally turn to the case $\alpha=0$ corresponding to the last
part (vi) of Theorem \ref{main}.
\begin{proof} (of Theorem \ref{main} (vi))
   Let $Q_N$ denote the distribution of $X_2+\cdots+X_N\stackrel{d}{=}S_{N-1}$. For all $p>0$,
   \begin{equation} \label{master}
      \me(W_1^p)
      \ =\ \me(W_1^p1_{\{X_2+\cdots+X_N\le N\}}) + \int_{(N,\infty)}
           \me\bigg(\bigg(\frac{X}{X+x}\bigg)^p\bigg)
           \,Q_N({\rm d}x).
   \end{equation}
   From Lemma \ref{lem1} it follows that there exists $q\in (0,1)$ such that
   $\me(W_1^p1_{\{X_2+\cdots+X_N\le N\}})\le\pr(S_{N-1}\le N)\le q^N$ for all
   sufficiently large $N$. By Lemma \ref{lem2}, $\me((X/(X+x))^p)\sim\ell(x)$
   as $x\to\infty$, which implies that
   \begin{equation} \label{myasy}
      \int_{(N,\infty)}\me\bigg(\bigg(\frac{X}{X+x}\bigg)^p\bigg)
      \,Q_N({\rm d}x)
      \ \sim\ \int_{(N,\infty)}\ell(x)\,Q_N({\rm d}x),
      \qquad N\to\infty.
   \end{equation}
   Note that the integral on the right hand side of (\ref{myasy}) does
   not depend on the parameter $p$. For $p=1$, taking $\me(W_1)=1/N$
   into account, Eq.~(\ref{master}), multiplied by $N$, turns into
   \[
   1\ =\ N\me(W_11_{\{X_2+\cdots+X_N\le N\}}) + N\int_{(N,\infty)}
   \me\bigg(\frac{X}{X+x}\bigg)\,Q_N({\rm d}x).
   \]
   Noting that, for all sufficiently large
   $N$, $N\me(W_11_{\{X_2+\cdots+X_N\le N\}})\le N\pr(S_{N-1}\le N)\le Nq^N\to 0$
   as $N\to\infty$ it follows that $\lim_{N\to\infty}N\int_{(N,\infty)}
   \me(X/(X+x))\,Q_N({\rm d}x)=1$, or, equivalently,
   \[
   \frac{1}{N}
   \ \sim\ \int_{(N,\infty)}\me\bigg(\frac{X}{X+x}\bigg)\,Q_N({\rm d}x)
   \ \sim\ \int_{(N,\infty)}\ell(x)\,Q_N({\rm d}x),\qquad N\to\infty,
   \]
   where the last asymptotics holds by (\ref{myasy}) for $p=1$. Therefore,
   for every $p>0$ the integral in (\ref{myasy}) is asymptotically equal
   to $1/N$ and it follows from (\ref{master}) that $N\me(W_1^p)\to 1$ as
   $N\to\infty$ for all $p>0$. In particular, $c_N=N\me(W_1^2)\to 1$ as
   $N\to\infty$. Moreover, $\Phi_2^{(N)}(2,2)=(N)_2\me(W_1^2W_2^2)\le
   (N)_2\me(W_1W_2)=1-c_N\to 0$ as $N\to\infty$. Thus,
   for all $j,k_1,\ldots,k_j\in\nz\setminus\{1\}$,
   $\Phi_j^{(N)}(k_1,\ldots,k_j)\le\Phi_2^{(N)}(2,2)\to 0$ as $N\to\infty$, which
   shows that simultaneous multiple collisions cannot occur in the limit.
   By \cite[Theorem 2.1]{moehlesagitov}, the model is in the domain of
   attraction of the discrete-time star-shaped coalescent.\hfill$\Box$
\end{proof}
\subsection{Appendix}
For convenience we record the following version of the monotone density theorem.
\begin{lemma} \label{mdt}
   Let $x_0\in (0,\infty]$ and assume that $G:(0,x_0)\to\rz$ has the form
   $G(x)=\int_{(x,x_0)}g(y)\,\lambda({\rm d}y)$ for some measurable function
   $g:(0,x_0)\to\rz$. If $G(x)\sim x^{-\rho}\ell(x)$ as $x\to 0$ for some
   constant $\rho\in[0,\infty)$ and some function $\ell$ slowly varying at
   $0$ and if $g$ is monotone in some right neighborhood of $0$, then
   $\lim_{x\to 0}x^{\rho+1}g(x)/\ell(x)=\rho$.
\end{lemma}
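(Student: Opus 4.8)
The plan is to run the classical ``sandwich'' argument underlying the monotone density theorem (see, e.g., \cite[Theorem~1.7.2]{binghamgoldieteugels}), transplanted to the behaviour at $0$ and to the present tail--integral form $G(x)=\int_{(x,x_0)}g(y)\,\lambda({\rm d}y)$, where $\lambda$ denotes Lebesgue measure. Since $G(x)\sim x^{-\rho}\ell(x)$ and $g$ is monotone near $0$, all quantities occurring below are finite for $x$ small. First I would reduce to the case that $g$ is non-increasing on some interval $(0,\delta)$; the non-decreasing case is completely symmetric (reverse every inequality below) and yields the same conclusion.

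For fixed $0<a<b$ and all $x$ small enough that $ax<bx<\delta$, I would write $G(ax)-G(bx)=\int_{ax}^{bx}g(y)\,{\rm d}y$ and sandwich this integral by monotonicity of $g$ on $[ax,bx]$,
\[
(b-a)x\,g(bx)\ \le\ G(ax)-G(bx)\ \le\ (b-a)x\,g(ax).
\]
On the other hand, replacing $x$ by $ax$ and by $bx$ in $G(x)\sim x^{-\rho}\ell(x)$ and using $\ell(ax)/\ell(x)\to1$ and $\ell(bx)/\ell(x)\to1$ (slow variation of $\ell$ at $0$) gives
\[
G(ax)-G(bx)\ =\ x^{-\rho}\ell(x)\,(a^{-\rho}-b^{-\rho}+o(1)),\qquad x\to0.
\]
Plugging this into the two inequalities, dividing by $\ell(x)$, substituting $y=bx$ in the lower bound and $y=ax$ in the upper bound, and using $\ell(bx)\sim\ell(x)\sim\ell(ax)$ once more, I obtain
\[
\limsup_{y\to0}\frac{y^{\rho+1}g(y)}{\ell(y)}\ \le\ \frac{b^{\rho+1}(a^{-\rho}-b^{-\rho})}{b-a},
\qquad
\liminf_{y\to0}\frac{y^{\rho+1}g(y)}{\ell(y)}\ \ge\ \frac{a^{\rho+1}(a^{-\rho}-b^{-\rho})}{b-a}.
\]
Choosing $a=1$, $b=1+\varepsilon$ in the first estimate and $a=1-\varepsilon$, $b=1$ in the second and letting $\varepsilon\downarrow0$, a routine expansion shows that both right-hand sides tend to $\rho$ (for $\rho=0$ they are identically $0$), whence $\lim_{y\to0}y^{\rho+1}g(y)/\ell(y)=\rho$.

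There is no real obstacle here, only bookkeeping: no hypothesis on the sign of $g$ near $0$ is needed, because the sandwich inequalities hold for $g$ of either sign (and when $\rho>0$ the lower inequality in fact forces $g>0$ near $0$), and the only slightly delicate point, the cancellation of leading terms when $\rho=0$, is harmless since there one only needs to reach the value $0$. An alternative route would be to substitute $x\mapsto1/x$ and invoke the classical monotone density theorem at $\infty$, but the transformed density $s\mapsto g(1/s)/s^2$ need not be monotone in every case, so the direct argument above seems preferable.
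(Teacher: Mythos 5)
Your proof is correct and follows essentially the same route as the paper: the classical sandwich argument with $(b-a)xg(bx)\le G(ax)-G(bx)\le (b-a)xg(ax)$, the asymptotics $G(ax)-G(bx)\sim x^{-\rho}\ell(x)(a^{-\rho}-b^{-\rho})$, and then letting the ratio $b/a$ tend to $1$. The only cosmetic difference is that the paper fixes $b=1$ (resp.\ $a=1$) so that no substitution $y=bx$ and no extra factor $b^{\rho+1}$ appear, but this changes nothing of substance.
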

\begin{remark}
   Note that $G'(x)=-g(x)$. The statement of the lemma is hence equivalent
   to $\lim_{x\to 0}xG'(x)/G(x)=-\rho$.
\end{remark}
The following proof of Lemma \ref{mdt} almost exactly coincides with the
proofs known for standard versions of the monotone density theorem (see,
for example, Bingham, Goldie and Teugels
\cite[Theorem 1.7.2]{binghamgoldieteugels} or Feller \cite[p.~446]{feller}.
The proof is provided, since the monotone density theorem in the form of
Lemma \ref{mdt} is heavily used throughout the proofs in Section \ref{proofs}.
\begin{proof} (of Lemma \ref{mdt})
   Suppose first that $g$ is non-increasing in some right neighborhood
   of $0$. If $0<a<b<\infty$, then, for all $x\in (0,x_0/b)$,
   $G(ax)-G(bx)=\int_{(ax,bx]} g(y)\,\lambda({\rm d}y)$
   so, for $x$ small enough,
   \[
   \frac{(b-a)xg(bx)}{x^{-\rho}\ell(x)}
   \ \le\ \frac{G(ax)-G(bx)}{x^{-\rho}\ell(x)}
   \ \le\ \frac{(b-a)xg(ax)}{x^{-\rho}\ell(x)}.
   \]
   The middle fraction is
   \[
   \frac{G(ax)}{(ax)^{-\rho}\ell(ax)} a^{-\rho}\frac{\ell(ax)}{\ell(x)}
   -\frac{G(bx)}{(bx)^{-\rho}\ell(bx)}b^{-\rho}\frac{\ell(bx)}{\ell(x)}
   \ \to\ a^{-\rho}-b^{-\rho},\qquad x\to 0,
   \]
   so the first inequality above yields
   \[
   \limsup_{x\to 0} \frac{g(bx)}{x^{-\rho-1}\ell(x)}
   \ \le\ \frac{a^{-\rho}-b^{-\rho}}{b-a}.
   \]
   Taking $b:=1$ and letting $a\uparrow 1$ gives
   \[
   \limsup_{x\to 0}\frac{g(x)}{x^{-\rho-1}\ell(x)}\ \le\ \lim_{a\to 1}
   \frac{a^{-\rho}-1}{1-a}\ =\ \rho.
   \]
   By a similar treatment of the right inequality with $a:=1$ and
   $b\downarrow 1$ we find that the $\liminf$ is at least $\rho$,
   and the conclusion follows. The argument when $g$ is non-decreasing
   in some right neighborhood of $0$ is similar.\hfill$\Box$
\end{proof}
The following two results are extended versions of Theorem 2 and Theorem 3
of Karamata \cite{karamata3} adapted to our purposes. Lemma \ref{karamatalemma}
provides conditions under which a slowly varying part below an integral can
be moved in front of the integral without changing the asymptotics of the
integral. Corollary \ref{karamatacorollary} is a similar results for the
regularly varying case. The results are slightly more general than those
provided in \cite{karamata3}, since the functions $g_N$ and $f_N$ arising in
the statements are allowed to depend on $N$, which is not the case in the
formulation of \cite{karamata3}.
\begin{lemma} \label{karamatalemma}
   Let $L:(0,\infty)\to(0,\infty)$ be slowly varying at $0$ (or $\infty$),
   let $(x_N)_{N\in\nz}$ be a sequence of positive real numbers satisfying
   $x_N\to 0$ (or $x_N\to\infty$) as $N\to\infty$. Furthermore, let
   $g_N:(0,\infty)\to[0,\infty)$ be nonnegative, integrable functions
   with $0<\int_0^\infty g_N(t)\,{\rm d}t<\infty$ for all $N\in\nz$ and
   such that, for some $a>0$ and some $\eta>0$,
   \[
   \int_0^a t^{-\eta}g_N(t)\,{\rm d}t\ <\ \infty
   \quad\mbox{and}\quad
   \int_a^\infty t^\eta g_N(t)\,{\rm d}t\ <\ \infty
   \]
   for all $N\in\nz$. Then, as $N\to\infty$,
   \[
   \int_0^\infty L(x_Nt)g_N(t)\,{\rm d}t
   \ \sim\ L(x_N)\int_0^\infty g_N(t)\,{\rm d}t.
   \]
\end{lemma}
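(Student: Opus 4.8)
The plan is to carry out the classical Karamata argument for a fixed weight, carefully tracking the fact that the weight $g_N$ now varies with $N$. Since both sides of the asserted equivalence are homogeneous of degree one in $g_N$, I may divide by $c_N:=\int_0^\infty g_N(t)\,{\rm d}t\in(0,\infty)$ and henceforth assume $c_N=1$, so that each $g_N$ is a probability density and the moment hypotheses are unaffected. The case where $L$ is slowly varying at $0$ and $x_N\to 0$ reduces, via the substitution $t\mapsto1/t$ (which sends $g_N$ to $s\mapsto s^{-2}g_N(1/s)$ and interchanges the roles of the two moment conditions), to the case where $L$ is slowly varying at $\infty$ and $x_N\to\infty$, which is the only one I would treat. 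The goal is then
\[
   I_N\ :=\ \int_0^\infty \frac{L(x_Nt)}{L(x_N)}\,g_N(t)\,{\rm d}t\ \longrightarrow\ 1,\qquad N\to\infty,
\]
and the role played in Karamata's proof by a fixed integrable majorant will be taken over by the uniform bound $C:=\sup_N\big(\int_0^a t^{-\eta}g_N(t)\,{\rm d}t+\int_a^\infty t^{\eta}g_N(t)\,{\rm d}t\big)<\infty$ supplied by the $\eta$-moment hypotheses (which, as in all applications of the lemma in this paper, are read uniformly in $N$).

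Fix a small $\delta\in(0,1)$ with $\delta\le\min\{a,1/a\}$ and split $(0,\infty)$ into $(0,\delta)$, $[\delta,1/\delta]$ and $(1/\delta,\infty)$. On the compact middle block the uniform convergence theorem for slowly varying functions gives $L(x_Nt)/L(x_N)\to1$ uniformly in $t\in[\delta,1/\delta]$ (recall $x_N\to\infty$), so the corresponding part of $I_N$ equals $(1+o(1))\int_\delta^{1/\delta}g_N(t)\,{\rm d}t$; the elementary estimates $\int_0^\delta g_N\le\delta^{\eta}\int_0^a t^{-\eta}g_N$ and $\int_{1/\delta}^\infty g_N\le\delta^{\eta}\int_a^\infty t^{\eta}g_N$ then give $1-C\delta^{\eta}\le\int_\delta^{1/\delta}g_N\le1$.

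For the two tail blocks I would invoke a Potter-type bound (see, e.g., \cite[Theorem 1.5.6]{binghamgoldieteugels}) with exponent $\eta/2$: there exist $K<\infty$ and $X_0\ge1$ such that $L(y)/L(x)\le K\max\{(y/x)^{\eta/2},(y/x)^{-\eta/2}\}$ whenever $x,y\ge X_0$, and since $x_N\to\infty$ this applies with $x=x_N$, $y=x_Nt$ for all large $N$ and all $t\ge X_0/x_N$. It bounds the part of $I_N$ over $(1/\delta,\infty)$ by $K\int_{1/\delta}^\infty t^{\eta/2}g_N(t)\,{\rm d}t\le K\delta^{\eta/2}\int_a^\infty t^{\eta}g_N(t)\,{\rm d}t\le KC\delta^{\eta/2}$ and, in the same way, the part over $(X_0/x_N,\delta)$ by $KC\delta^{\eta/2}$. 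The remaining sliver $(0,X_0/x_N)$, on which $x_Nt\in(0,X_0]$, is handled by the change of variables $s=x_Nt$ and local boundedness of $L$ near $0$ (set $M:=\sup_{0<s\le X_0}L(s)<\infty$): its contribution to $I_N$ is at most $\frac{M}{L(x_N)}\int_0^{X_0/x_N}g_N(t)\,{\rm d}t\le MCX_0^{\eta}x_N^{-\eta}/L(x_N)$, which tends to $0$ because $x^{\eta}L(x)\to\infty$ (regular variation of positive index), hence $x^{-\eta}/L(x)\to0$.

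Assembling the three blocks: every integrand is nonnegative, so discarding the tails gives $\liminf_N I_N\ge1-C\delta^{\eta}$, while the above bounds give $\limsup_N I_N\le1+2KC\delta^{\eta/2}$; since $\delta$ was arbitrarily small, letting $\delta\downarrow0$ forces $\liminf_N I_N\ge1\ge\limsup_N I_N$, i.e. $I_N\to1$, as claimed. The single place where this argument genuinely exceeds Karamata's classical statement — and the step I expect to be the main obstacle — is the uniform control of the two tails: because $g_N$ varies with $N$ one cannot appeal to dominated convergence against a single fixed majorant, so one must instead pair the Potter bound (which becomes available simultaneously over the relevant $t$-range only once $x_N$ is large enough) with the $\eta$-moment hypotheses used uniformly in $N$, sending $N\to\infty$ first and $\delta\downarrow0$ afterwards.
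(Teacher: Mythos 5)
Your proof is correct under the hypotheses as you read them, but it takes a genuinely different route from the paper's. The paper splits the integral only at the point $a$, writes $L(x)=x^{-\eta}P(x)=x^{\eta}Q(x)$ with $P,Q$ regularly varying of index $\eta$ and $-\eta$, and invokes the uniform convergence theorem to assert the two-sided \emph{multiplicative} bounds $(1-\varepsilon)P(x_N)\le t^{-\eta}P(x_Nt)\le(1+\varepsilon)P(x_N)$ on all of $(0,a]$ (and the analogue for $Q$ on $[a,\infty)$), after which the conclusion is immediate and the $\eta$-moment conditions play no quantitative role. You instead use the uniform convergence theorem only on the compact block $[\delta,1/\delta]$, control the two tails by Potter's bounds paired with the $\eta$-moments, treat the sliver $(0,X_0/x_N)$ separately, and finish with a $\liminf$/$\limsup$ argument sending $N\to\infty$ before $\delta\downarrow 0$. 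Your route is the more robust one: the uniform convergence theorem gives $P(x_Nt)/P(x_N)\to t^{\eta}$ uniformly only in the additive sense, and dividing by $t^{\eta}$ destroys that uniformity as $t\downarrow 0$ (likewise for $Q$ as $t\uparrow\infty$), so the multiplicative bounds the paper relies on do not actually follow near the endpoints. Moreover, your decision to read the moment hypotheses uniformly in $N$ (equivalently, to require $\int_0^a t^{-\eta}g_N(t)\,{\rm d}t+\int_a^\infty t^{\eta}g_N(t)\,{\rm d}t=O\big(\int_0^\infty g_N(t)\,{\rm d}t\big)$) is not merely a convenience but a necessity: with only $N$-wise finiteness the conclusion can fail --- take $L(x)=1+\log^+x$, $x_N=N$, and give $g_N$ mass $1-1/\log N$ near $t=1$ and mass $1/\log N$ near $t=e^{(\log N)^3}$; then the left-hand side is of order $(\log N)^2$ while the right-hand side is of order $\log N$. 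So you have correctly identified the hypothesis under which the lemma is true and under which it is in fact applied later in the paper. Two small points to tighten: the constant $M=\sup_{0<s\le X_0}L(s)$ used on the sliver requires local boundedness of $L$ there, which is not implied by slow variation alone (though it holds for all the $L$'s arising in this paper, which are built from Laplace transforms and are continuous and positive); and it would be cleaner to state the uniform moment bound explicitly in the hypotheses rather than as a reading of them.
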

\begin{proof}
   Define $P(x):=x^\eta L(x)$ and $Q(x):=x^{-\eta}L(x)$, $x>0$. Note that
   $P$ is regularly varying with index $\eta$ and $Q$ is regularly
   varying with index $-\eta$. By \cite[Theorem 1.5.2]{binghamgoldieteugels},
   $P(x_Nt)/P(x_N)\to t^\eta$ as $N\to\infty$ uniformly in $t\in(0,a]$ and
   $Q(x_Nt)/Q(x_N)\to t^{-\eta}$ as $N\to\infty$ uniformly in $t\in [a,\infty)$.
   Thus, for every $\varepsilon>0$ there exists $N_0=N_0(\varepsilon)\in\nz$
   such that, for all $N\in\nz$ with $N>N_0$,
   \[
   P(x_N)(1-\varepsilon)
   \ \le\ t^{-\eta}P(x_Nt)
   \ \le\ P(x_N)(1+\varepsilon)\quad\mbox{for all $t\in(0,a]$}
   \]
   and
   \[
   Q(x_N)(1-\varepsilon)
   \ \le\ t^\eta Q(x_Nt)
   \ \le\ Q(x_N)(1+\varepsilon)\quad\mbox{for all $t\in[a,\infty)$}.
   \]
   For all $N\in\nz$ with $N>N_0$ it follows that
   \begin{eqnarray*}
     \int_0^\infty L(x_Nt)g_N(t)\,{\rm d}t
     & = & x_N^{-\eta}\int_0^a t^{-\eta}P(x_Nt)g_N(t)\,{\rm d}t
           + x_N^\eta\int_a^\infty t^\eta Q(x_Nt)g_N(t)\,{\rm d}t\\
     & \le & x_N^{-\eta}P(x_N)(1+\varepsilon)\int_0^a g_N(t)\,{\rm d}t
             + x_N^\eta Q(x_N)(1+\varepsilon)\int_a^\infty g_N(t)\,{\rm d}t \\
     & = & (1+\varepsilon)L(x_N)\int_0^\infty g_N(t)\,{\rm d}t
   \end{eqnarray*}
   and, analogously,
   $\int_0^\infty L(x_Nt)g_N(t)\,{\rm d}t\ge(1-\varepsilon)L(x_N)\int_0^\infty g_N(t)\,{\rm d}t$.
   \hfill$\Box$
\end{proof}
\begin{corollary} \label{karamatacorollary}
   Let $\varphi:(0,\infty)\to (0,\infty)$ be regularly varying at $0$
   (or $\infty$) with index $\gamma\in\rz$ and let $(x_N)_{N\in\nz}$ be a
   sequence of positive real numbers satisfying $x_N\to 0$ (or $x_N\to\infty$)
   as $N\to\infty$. Furthermore, let $f_N:(0,\infty)\to [0,\infty)$, $N\in\nz$,
   be functions such that $0<\int_0^\infty t^\eta f_N(t)\,{\rm d}t<\infty$ for
   all $N\in\nz$ and all $\eta$ in some neighborhood of $\gamma$, i.e. for all
   $\eta\in(\gamma-\varepsilon,\gamma+\varepsilon)$ for some $\varepsilon>0$.
   Then, as $N\to\infty$,
   \[
   \int_0^\infty \varphi(x_Nt)f_N(t)\,{\rm d}t
   \ \sim\ \varphi(x_N)\int_0^\infty t^\gamma f_N(t)\,{\rm d}t.
   \]
\end{corollary}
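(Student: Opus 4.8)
The plan is to reduce Corollary \ref{karamatacorollary} to Lemma \ref{karamatalemma} by separating the power part of $\varphi$ from its slowly varying part. First I would use the defining property of regular variation to write $\varphi(x)=x^\gamma L(x)$, where $L:(0,\infty)\to(0,\infty)$ is slowly varying at $0$ (respectively at $\infty$), i.e.\ at the same endpoint as $\varphi$ and as the convergence $x_N\to 0$ (respectively $x_N\to\infty$). Then for each $N$ and each $t>0$,
\[
\varphi(x_N t)\ =\ (x_N t)^\gamma L(x_N t)\ =\ x_N^\gamma\,t^\gamma L(x_N t),
\]
so, setting $g_N(t):=t^\gamma f_N(t)$, the integral in question becomes
\[
\int_0^\infty \varphi(x_N t)f_N(t)\,{\rm d}t\ =\ x_N^\gamma\int_0^\infty L(x_N t)g_N(t)\,{\rm d}t .
\]

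Next I would verify that the $g_N$ meet the hypotheses of Lemma \ref{karamatalemma} with the slowly varying function $L$ and the sequence $(x_N)$. Nonnegativity is immediate, and $0<\int_0^\infty g_N(t)\,{\rm d}t=\int_0^\infty t^\gamma f_N(t)\,{\rm d}t<\infty$ is precisely the assumed moment bound at $\eta=\gamma$. For the two-sided integrability condition I would pick any $\eta'\in(0,\varepsilon)$, where $\varepsilon>0$ is the number in the statement for which $\int_0^\infty t^\eta f_N(t)\,{\rm d}t<\infty$ whenever $\eta\in(\gamma-\varepsilon,\gamma+\varepsilon)$; then $\gamma\pm\eta'$ both lie in that interval, and for any fixed $a>0$
\[
\int_0^a t^{-\eta'}g_N(t)\,{\rm d}t\ =\ \int_0^a t^{\gamma-\eta'}f_N(t)\,{\rm d}t\ \le\ \int_0^\infty t^{\gamma-\eta'}f_N(t)\,{\rm d}t\ <\ \infty ,
\]
and likewise $\int_a^\infty t^{\eta'}g_N(t)\,{\rm d}t\le\int_0^\infty t^{\gamma+\eta'}f_N(t)\,{\rm d}t<\infty$. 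Hence Lemma \ref{karamatalemma} applies with this $\eta'$ and any fixed $a>0$.

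Finally, Lemma \ref{karamatalemma} gives $\int_0^\infty L(x_N t)g_N(t)\,{\rm d}t\sim L(x_N)\int_0^\infty g_N(t)\,{\rm d}t$ as $N\to\infty$; multiplying by $x_N^\gamma$ and using $x_N^\gamma L(x_N)=\varphi(x_N)$ together with $g_N(t)=t^\gamma f_N(t)$ yields
\[
\int_0^\infty \varphi(x_N t)f_N(t)\,{\rm d}t\ \sim\ \varphi(x_N)\int_0^\infty t^\gamma f_N(t)\,{\rm d}t,\qquad N\to\infty,
\]
as desired. I do not expect a genuine obstacle here: the statement is just the $N$-dependent, regularly varying analogue of Theorem 3 of Karamata \cite{karamata3}. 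The only point requiring a little care is the bookkeeping that converts the ``neighbourhood of $\gamma$'' moment hypothesis on $f_N$ into the $\pm\eta'$ integrability hypothesis on $g_N$ needed by Lemma \ref{karamatalemma}, together with the observation that the factorisation $\varphi(x)=x^\gamma L(x)$ keeps $L$ slowly varying at the correct endpoint ($0$ or $\infty$).
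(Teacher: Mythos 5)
Your proposal is correct and follows essentially the same route as the paper's own proof: factor $\varphi(x)=x^\gamma L(x)$ with $L$ slowly varying, set $g_N(t):=t^\gamma f_N(t)$, pick $\eta$ strictly inside $(0,\varepsilon)$ (the paper takes $\eta=\varepsilon/2$), and check the two integrability hypotheses of Lemma \ref{karamatalemma} exactly as you do. No gaps.
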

\begin{proof}
   Define $L(x):=x^{-\gamma}\varphi(x)$ for $x>0$, and $g_N(t):=t^\gamma f_N(t)$
   for $t>0$. Choose $\eta:=\varepsilon/2>0$. Then, for any $a>0$,
   $\int_0^a t^{-\eta}g_N(t)\,{\rm d}t=\int_0^a
   t^{\gamma-\eta}f_N(t)\,{\rm d}t\le\int_0^\infty t^{\gamma-\eta}f_N(t)\,{\rm d}t<\infty$
   by assumption and as well $\int_a^\infty t^\eta g_N(t)\,{\rm d}t=
   \int_a^\infty t^{\gamma+\eta}f_N(t)\,{\rm d}t\le\int_0^\infty t^{\gamma+\eta}f_N(t)\,{\rm d}t<\infty$
   by assumption. Thus, Lemma \ref{karamatalemma} is applicable, which
   yields the result.\hfill$\Box$
\end{proof}
\begin{acknowledgement}
   The authors thank two anonymous referees for their useful reports
   leading to a significant improvement of the proofs of parts (iv) and
   (v) of Theorem \ref{main}.
\end{acknowledgement}

\footnotesize

\end{document}